\documentclass[11pt]{article}
\bibliographystyle{amsplain}
\usepackage{amsmath,amssymb,amsthm}
\usepackage{enumerate}
\usepackage{hyperref}
\usepackage{cleveref}
\usepackage{mathabx}
\usepackage{blkarray}
\usepackage{tikz}
\usepackage{authblk}
\usepackage{xspace}
\usepackage{thmtools}
\usepackage{thm-restate}
\usepackage{lineno}

\usetikzlibrary{backgrounds}
\usetikzlibrary{shapes}
\usetikzlibrary{positioning}
\hypersetup{
	pdftitle = {Bounds for the Twin-width of Graphs},
	pdfauthor = {Jungho Ahn, Kevin Hendrey, Donggyu Kim, Sang-il Oum}
}
\newcommand\abs[1]{\lvert #1\rvert}
\newtheorem{THM}{Theorem}[section]
\newtheorem{LEM}[THM]{Lemma}
\newtheorem{COR}[THM]{Corollary}
\newtheorem{PROP}[THM]{Proposition}

\theoremstyle{remark}

\theoremstyle{definition}

\newcommand{\tww}{\mathrm{tww}}

\newcommand{\ER}{\text{Erd\H{o}s-R\'{e}nyi}\xspace}
\newcommand{\E}{\mathbb{E}}
\newcommand{\rdeg}{\mathrm{rdeg}}
\newcommand{\DeltaR}{\Delta\!^R}

\usepackage{boxedminipage}

\begin{document}
\date{July 5, 2022}
\title{Bounds for the Twin-width of Graphs}
\author[2,1]{Jungho~Ahn}
\author[1]{Kevin~Hendrey}
\author[2,1]{Donggyu~Kim}
\author[1,2]{Sang-il~Oum}
\affil[1]{Discrete Mathematics Group, Institute for Basic Science (IBS), Daejeon, South~Korea}
\affil[2]{Department of Mathematical Sciences, KAIST, Daejeon,~South~Korea}
\affil[ ]{\small \textit{Email addresses:} \texttt{junghoahn@kaist.ac.kr}, \texttt{kevinhendrey@ibs.re.kr}, \texttt{donggyu@kaist.ac.kr}, \texttt{sangil@ibs.re.kr}}
\footnotetext[1]{All authors are supported by the Institute for Basic Science (IBS-R029-C1).}

\maketitle

\begin{abstract}
	Bonnet, Kim, Thomass\'{e}, and Watrigant~\cite{twin-width1} introduced the \emph{twin-width} of a graph.
	We show that the twin-width of an $n$-vertex graph is less than $(n+\sqrt{n\ln n}+\sqrt{n}+2\ln n)/2$, and the twin-width of an $m$-edge graph 
	for a positive $m$ is less than $\sqrt{3m}+ m^{1/4} \sqrt{\ln m} / (4\cdot 3^{1/4}) +  3m^{1/4} / 2$.
	Conference graphs of order $n$ (when such graphs exist) have twin-width at least $(n-1)/2$, and we show that Paley graphs achieve this lower bound.
	We also show that the twin-width of the Erd\H{o}s-R\'{e}nyi random graph $G(n,p)$ with $1/n\leq p\leq 1/2$ is larger than $2p(1-p)n - (2\sqrt{2}+\varepsilon)\sqrt{p(1-p)n\ln n}$ asymptotically almost surely for any positive $\varepsilon$.
	Lastly, we calculate the twin-width of random graphs $G(n,p)$ with $p\leq c/n$ for a constant $c<1$, determining the thresholds at which the twin-width jumps from $0$ to $1$ and from $1$ to $2$.
\end{abstract}

\section{Introduction}\label{sec:intro}

Bonnet, Kim, Thomass\'{e}, and Watrigant~\cite{twin-width1} introduced a new width parameter, called the \emph{twin-width}, of a graph $G$, denoted by $\tww(G)$.
The twin-width of a graph $G$ captures a way of reducing $G$ into a one-vertex graph by iteratively contracting pairs of vertices with similar neighborhoods (see Section~\ref{subsec:twin} for the formal definiton).
It has been shown that every graph class of bounded twin-width is \emph{small}\footnote[3]{A graph class $\mathcal{C}$ is \emph{small} if $\mathcal{C}$ has at most $n!c^n$ graphs labeled by $[n]$ for a constant $c$.}~\cite{twin-width2}, $\chi$-bounded~\cite{twin-width3}, and admits a linear-time algorithm for first-order model checking~\cite{twin-width1} (if we are given a certificate for an input graph to have small twin-width).
Despite being a relatively new concept, twin-width has already generated a large amount of interest~\cite{twin-width1,twin-width2,twin-width3,twin-width4,twin-width6,GPT2021,BKRTW2021,BH2021,DGJOR2021,ST2021,BNOST2021,SS2021}.

Twin-width can be seen as a generalization of both tree-width and rank-width in the sense that graph classes of bounded tree-width or rank-width also have bounded twin-width~\cite{twin-width1}.
However, there are graph classes that have bounded twin-width and unbounded tree-width or rank-width.
For instance, the $n\times n$ grid has tree-width $n$ and rank-width $n-1$~\cite{squaregrid}, but twin-width at most~$4$~\cite{twin-width1,gridtwinwidth}.
Other interesting classes of bounded twin-width include proper minor-closed classes, map graphs, and $K_t$-free unit $d$-dimensional ball graphs~\cite{twin-width1,twin-width2}.

Given a graph parameter, it is natural to ask for the highest possible value of the parameter.
The maximum tree-width or path-width of an $n$-vertex graph is equal to $n-1$.
By definition, the rank-width of an $n$-vertex graph is bounded above by $\lceil n/3\rceil$, and Lee, Lee, and Oum~\cite{LLO2012} showed that random graphs $G(n,p)$ with a constant $p\in(0,1)$ have rank-width $\lceil n/3\rceil-O(1)$ asymptotically almost surely.
By definition, the linear rank-width of an $n$-vertex graph is bounded above by $\lfloor n/2\rfloor$ and by using the same method of~\cite{LLO2012}, one can show that random graphs $G(n,p)$ with a constant $p\in(0,1)$ have linear rank-width $\lfloor n/2\rfloor-O(1)$ asymptotically almost surely.
Johansson~\cite[Theorem~5.2]{Joh98} showed that the clique-width of an $n$-vertex graph is at most $n-k$ if $n>2^k+k$.
Kneis, M\"{o}lle, Richter, and Rossmanith~\cite{pathbound} showed that the path-width of an $n$-vertex $m$-edge graph is at most $13m/75+O(\log n)$, and used this bound to develop exponential-time algorithms for various problems for sparse graphs.
One can observe from definition that the twin-width of an $n$-vertex graph is at most $n-2$, and currently no nontrivial upper bound for twin-width has appeared in the literature.

Here are our main results.
Firstly, we present an upper bound for the twin-width of an $n$-vertex graph.

\begin{restatable}{THM}{firstmain}\label{main1}
	For a positive integer $n$ and an $n$-vertex graph $G$,
	\begin{linenomath*}\[
		\tww(G)<\frac{1}{2}(n+\sqrt{n\ln n} + \sqrt{n} + 2\ln n).
	\]\end{linenomath*}
\end{restatable}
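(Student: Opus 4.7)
Plan: The proof is probabilistic. We construct a contraction sequence based on a uniformly random perfect matching on $V(G)$ and use concentration inequalities to bound the maximum red degree. Let $n = \abs{V(G)}$. Sample a uniformly random perfect matching $M$ on $V(G)$ (handling odd $n$ by either adding a dummy vertex or considering a near-perfect matching with one leftover vertex). Label the edges of $M$ as $(a_1, b_1), \ldots, (a_{\lfloor n/2 \rfloor}, b_{\lfloor n/2 \rfloor})$. The contraction sequence has two phases: first, contract the pairs of $M$ one at a time in arbitrary order, producing $\lceil n/2 \rceil$ super-vertices; then, contract these super-vertices sequentially until only one vertex remains.

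The second phase contributes trivially: after round~$1$ there are at most $\lceil n/2 \rceil$ vertices, so the red degree at any subsequent step is bounded by $\lceil n/2 \rceil - 1 < n/2$. The main analysis is therefore on round~$1$. When we contract $(a_i, b_i)$ into $c_i$, the red degree of $c_i$ equals the number of current vertices with different black/red/none relationship (``mixed adjacency'') to $a_i$ and to $b_i$. The counting identity
\begin{linenomath*}\[
\sum_{\{u,v\} \subseteq V(G)} \abs{N(u) \triangle N(v) \setminus \{u,v\}} = \sum_{w \in V(G)} d_w(n-1-d_w) \leq \frac{n(n-1)^2}{4},
\]\end{linenomath*}
divided by $\binom{n}{2}$, gives an average symmetric difference of at most $(n-1)/2$ in $G$, so the expected red degree of $c_i$ at its creation is at most $(n-1)/2$. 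A Chernoff- or Azuma-type concentration inequality then shows that the deviation from expectation is at most $O(\sqrt{n \ln n})$ with high probability; a union bound over the $O(n)$ steps of round~$1$ and the $n$ vertices whose red degree we track yields that, with positive probability, the maximum red degree throughout the sequence is at most $n/2 + O(\sqrt{n \ln n})$.

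The main obstacle is the concentration analysis. Since the trigraph evolves as we contract, the red degrees at different steps are not independent, so standard Chernoff bounds do not apply directly; a martingale formulation with Azuma's inequality would be needed. Furthermore, the red degree of an older super-vertex can accumulate new red edges (at most one per subsequent contraction) throughout the sequence, so we must control this accumulation by exploiting the random-matching structure (exchangeability of pair positions, etc.). Tuning the constants to match precisely the stated bound $(n + \sqrt{n \ln n} + \sqrt{n} + 2\ln n)/2$ is largely routine but requires careful bookkeeping of tail probabilities and union-bound overhead.
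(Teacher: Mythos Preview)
Your approach has a genuine gap at the concentration step. You assert that for a uniformly random pair $(a_i,b_i)$ the quantity $\abs{(N(a_i)\triangle N(b_i))\setminus\{a_i,b_i\}}$ concentrates around its mean $(n-1)/2$ with fluctuation $O(\sqrt{n\ln n})$. But this quantity is a deterministic function of which two vertices happen to be paired, and its distribution over a random pair need not concentrate at all: the counting identity you quote only bounds the \emph{average} over all $\binom{n}{2}$ pairs. Take $G=K_{n/2,n/2}$: a pair inside one side has symmetric difference $0$, while a cross pair has symmetric difference $n-2$. In a uniformly random perfect matching a constant fraction of the pairs are cross pairs, so with probability close to $1$ some $c_i$ is born with red degree $n-2$, far above $n/2+O(\sqrt{n\ln n})$. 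No Chernoff or Azuma argument rescues you here, because altering the single choice of $(a_i,b_i)$ can swing its red degree by $\Theta(n)$; the Lipschitz constant is too large.

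The paper sidesteps this by decoupling the two sources of error. First it \emph{greedily} (not randomly) extracts a near-perfect set $M$ of disjoint pairs, each satisfying $\abs{(N(u)\triangle N(v))\setminus\{u,v\}}\le (n+\sqrt{n}-1)/2$; Proposition~\ref{prop:L} guarantees such a pair inside any set of at least $\sqrt{n}$ remaining vertices, so $\abs{M}>(n-\sqrt{n})/2$. The only randomness is then in the \emph{order} in which the pairs of $M$ are contracted. For a fixed $c_i$, each later contraction shifts $\rdeg(c_i)$ by $-1$, $0$, or $+1$, and the random permutation turns the running sum into a uniformly random North-East lattice path; a reflection-principle bound (Corollary~\ref{cor:latticepath2}) controls the probability that this path overshoots by $t$, and a union bound over $i$ finishes. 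The point is that the initial red degree of every $c_i$ is bounded \emph{deterministically}, and randomness is invoked only for the $\pm1$ drift—exactly the regime where concentration is available.
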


Secondly, we present an upper bound for the twin-width of a graph $G$ in terms of its number of edges.
This refines the bound in Theorem~\ref{main1} when~$G$ has at most $\abs{V(G)}^2/12$ edges.

\begin{restatable}{THM}{secondmain}\label{main2}
	For a positive integer $m$ and an $m$-edge graph $G$,
	\begin{linenomath*}\[
		\tww(G)<\sqrt{3m}+\frac{m^{1/4} \sqrt{\ln m}}{4\cdot 3^{1/4}} +\frac{3 m^{1/4}}{2}.
	\]\end{linenomath*}
\end{restatable}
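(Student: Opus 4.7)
The plan is to bootstrap from Theorem~\ref{main1} by exploiting that an $m$-edge graph has many vertices of low degree. Set a threshold $d$, to be optimized (the main term comes out right with $d=\sqrt{m/3}$), and partition $V(G)=H\cup L$ where $H=\{v:\deg_G(v)>d\}$ and $L=V(G)\setminus H$. The handshake lemma gives $|H|\cdot d<2m$, so $|H|<2m/d=2\sqrt{3m}$.

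I would construct the contraction sequence in two phases. In phase one, the low-degree vertices in $L$ are absorbed via contractions whose red-degree cost is controlled by $d$: contracting two current vertices of total degree at most $d$ produces a super-vertex of red degree at most $2d<\sqrt{3m}$, which stays within budget. After phase one, the trigraph has at most $|H|$ (or a small multiple) vertices remaining. In phase two, Theorem~\ref{main1} is invoked on this residual object of at most roughly $2\sqrt{3m}$ vertices, yielding red degree at most $|H|/2+O(\sqrt{|H|\ln|H|})$. Substituting $|H|\le 2\sqrt{3m}$ and $\ln|H|\approx \tfrac{1}{2}\ln m$ converts the Theorem~\ref{main1} correction $(\sqrt{n\ln n}+\sqrt{n}+2\ln n)/2$ into errors of order $m^{1/4}\sqrt{\ln m}$ and $m^{1/4}$, matching the stated form.

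The main obstacle will be controlling the red degree under cascading low-degree contractions. If two already-merged super-vertices each have total degree $2d$, a further merge pushes the red degree to $4d$, violating the budget; so the phase-one sequence must be arranged so that each super-vertex either is passed directly into phase two or is merged only with another vertex whose current total degree is still at most $d$. A clean way to do this is to route each $L$-vertex into a chosen $H$-vertex via a short sequence of low-cost merges, or alternatively to argue by a random-pairing analogue of Theorem~\ref{main1}'s proof but with concentration using $\sum\deg(v)=2m$ rather than $n$; the exact constants $1/(4\cdot 3^{1/4})$ and $3/2$ should come out of a precise optimization of $d$ together with these plug-in estimates. A final minor technicality is to extend Theorem~\ref{main1} from graphs to trigraphs so that the red edges carried into phase two are properly absorbed by the red-degree budget.
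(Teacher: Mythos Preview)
Your high-level plan---contract most of the graph cheaply, then invoke Theorem~\ref{main1} on a core of $O(\sqrt{m})$ vertices---is exactly the paper's strategy, but the decomposition you propose does not carry it through, and the obstacle you flag as the ``main obstacle'' is precisely where the argument breaks.

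The high/low degree split $V=H\cup L$ gives $|H|<2\sqrt{3m}$, but it says nothing useful about how to collapse $L$. Your suggestion to ``route each $L$-vertex into a chosen $H$-vertex'' fails outright: an $H$-vertex can have degree as large as $m$, so after absorbing anything into it the resulting super-vertex may have red-degree far above $\sqrt{3m}$. Your alternative---merging $L$-vertices among themselves---runs straight into the cascading problem you yourself identify, and a random-pairing argument in the style of Theorem~\ref{main1} does not obviously control it, since that analysis relies on every pair in the matching having bounded symmetric difference, a property you have no analogue of here. Moreover, $|H|$ can be much smaller than $\sqrt{m}$ (even zero, e.g.\ for a $d$-regular graph), so ``reduce to $|H|$ vertices and apply Theorem~\ref{main1}'' is not a well-defined target.

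What the paper does instead is replace the high/low split by a \emph{degree-sum partition} (Lemma~\ref{lem:partition}): for $k\approx\sqrt{4m/3}$ one greedily partitions $V(G)$ into at most $\lceil k\rceil$ blocks $B_1,\dots,B_{k'}$, each with $\sum_{v\in B_i}\deg_G(v)\le 2m/k\approx\sqrt{3m}$, together with a leftover set $R$ of fewer than $k$ vertices. Contracting each $B_i$ to a point bounds that super-vertex's red-degree by $2m/k$ directly via Lemma~\ref{lem:contraction}, with no cascading issue whatsoever. Then Theorem~\ref{main1} is applied to the untouched induced subgraph $G[R]$---an honest graph, not a trigraph, so no extension is needed---while the $k'$ block-vertices sit alongside, contributing at most $k'$ extra red edges to each vertex arising from $R$. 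The balance $2m/k\approx \tfrac{3}{2}k$ at $k=\sqrt{4m/3}$ is what produces the leading coefficient $\sqrt{3}$, and the lower-order terms come from feeding $r<k$ into the correction term of Theorem~\ref{main1}. Your proposal is missing this partition lemma; it is the device that simultaneously resolves the cascading problem and makes the ``minor technicality'' of a trigraph version of Theorem~\ref{main1} disappear.
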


Thirdly, we present an asymptotic lower bound for the twin-width of random graphs.

\begin{restatable}{THM}{thirdmain}\label{main3}
	Let $G:=G(n,p)$ be a random graph where $p:=p(n)$ is a function such that $1/n\leq p\leq1/2$ and $\varepsilon$ be a positive real.
	Then
	\begin{linenomath*}\[
		\tww(G) > 2p(1-p)n - (2\sqrt{2}+\varepsilon)\sqrt{p(1-p)n \ln n}
	\]\end{linenomath*}
	asymptotically almost surely.
\end{restatable}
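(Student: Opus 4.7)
The plan is to observe that any contraction sequence must have a first step, and that this first step alone already produces a red-degree lower bound on $\tww(G)$. Concretely, if the first step merges vertices $u$ and $v$ into a new vertex $w$, then in the resulting trigraph the red neighbors of $w$ are exactly those $x\in V(G)\setminus\{u,v\}$ that are adjacent in $G$ to precisely one of $u,v$. Hence $w$ has red degree $|N_G(u)\triangle N_G(v)\setminus\{u,v\}|$, and since $\tww(G)$ is the minimum over all contraction sequences of the maximum red degree, we obtain the purely structural lower bound
$$\tww(G) \;\ge\; \min_{\{u,v\}\in\binom{V(G)}{2}}\;\bigl|N_G(u)\triangle N_G(v)\setminus\{u,v\}\bigr|.$$
The theorem is therefore reduced to showing that, a.a.s., every pair $\{u,v\}$ achieves a large symmetric difference.

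Next I would compute the distribution of $X_{u,v}:=|N_G(u)\triangle N_G(v)\setminus\{u,v\}|$ for a fixed pair. For each third vertex $x$, the events $ux\in E(G)$ and $vx\in E(G)$ are independent with probability $p$ each, so $x$ lies in $N_G(u)\triangle N_G(v)$ with probability $2p(1-p)$, and these events are independent across $x$. Thus $X_{u,v}\sim\mathrm{Bin}(n-2,\,2p(1-p))$ with mean $\mu=2p(1-p)(n-2)$. I would then apply the standard multiplicative Chernoff lower tail
$$\Pr\bigl[X_{u,v} < \mu - t\bigr]\;\le\;\exp\!\bigl(-t^2/(2\mu)\bigr),$$
choosing $t$ just below $(2\sqrt{2}+\varepsilon)\sqrt{p(1-p)n\ln n}$; substituting gives
$$\frac{t^2}{2\mu}\;\sim\;\frac{(2\sqrt{2}+\varepsilon)^2}{4}\,\ln n\;=\;\bigl(2+\sqrt{2}\,\varepsilon+\varepsilon^2/4\bigr)\ln n,$$
so $\Pr[X_{u,v}<\mu-t]\le n^{-2-\Omega(\varepsilon)}$.

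Finally I would union-bound this estimate over the at most $\binom{n}{2}<n^2/2$ choices of $\{u,v\}$; the resulting failure probability is $n^{-\Omega(\varepsilon)}/2 \to 0$, which yields the claim asymptotically almost surely. The main thing to get right is bookkeeping: tracking the small difference between $\mu$ and $2p(1-p)n$, absorbing the deletion of $u$ and $v$ from the symmetric difference (at most $2$), and verifying that the hypothesis $p\ge 1/n$ is compatible with applying Chernoff at the stated constant---when $2p(1-p)n$ is smaller than the error term the claim is vacuous since $\tww(G)\ge 0$, so only the moderate range of $p$ is genuinely non-trivial. I do not expect a significant obstacle: the interesting content lies in the matching upper bound of Theorem~\ref{main1}, whereas this lower bound is witnessed already at the very first contraction of any sequence.
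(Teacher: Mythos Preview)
Your proposal is correct and follows essentially the same approach as the paper: invoke the structural lower bound $\tww(G)\ge\min_{u\neq v}|(N_G(u)\triangle N_G(v))\setminus\{u,v\}|$ (the paper's Lemma~\ref{lem:firststep}), observe that each such count is $\mathrm{Bin}(n-2,2p(1-p))$, apply the Chernoff lower tail, and union-bound over the $\binom{n}{2}$ pairs. The paper handles the bookkeeping you anticipate by introducing two auxiliary constants $C=2+\varepsilon/\sqrt{2}$ and $D=2+\varepsilon/2$ to absorb the gap between $n$ and $n-2$, but the substance is identical.
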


By Theorems~\ref{main1} and~\ref{main3} with $p=1/2$,
\begin{linenomath*}\[
	\lim_{n\rightarrow\infty}\max\left\{\frac{\tww(G)}{\abs{V(G)}}: \abs{V(G)}=n\right\}=\frac{1}{2}.
\]\end{linenomath*}
In terms of the number of edges, we can state the following by Theorems~\ref{main2} and~\ref{main3} with $p=1/3$:
\begin{linenomath*}\[
	1.088<\frac{4\sqrt{6}}{9}\leq\limsup_{m\rightarrow\infty}\max\left\{\frac{\tww(G)}{\sqrt{\abs{E(G)}}}:\abs{E(G)}=m\right\}\leq\sqrt{3}<1.733.
\]\end{linenomath*}
It would be interesting to determine this value exactly.

The best lower bound we have for the maximum twin-width of an $n$-vertex graph comes from conference graphs and is better than the lower bound for random graphs from Theorem~\ref{main3}.
As we will discuss in Section~\ref{sec:paley}, every $n$-vertex conference graph has twin-width at least $(n-1)/2$.
In particular, we prove that Paley graphs achieve this lower bound, which answers an open problem of Schidler and Szeider~\cite{SS2021}.
It is an open problem to determine whether there is an $n$-vertex graph having twin-width at least $n/2$.

\begin{restatable}{THM}{Paley}\label{Paley}
	For each prime power $q$ with $q\equiv 1\pmod{4}$, the Paley graph $P(q)$ has twin-width exactly $(q-1)/2$.
\end{restatable}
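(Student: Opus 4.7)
The lower bound $\tww(P(q))\ge(q-1)/2$ is immediate from the discussion earlier in this section, since $P(q)$ is a $q$-vertex conference graph. The content of the theorem lies in the matching upper bound.

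Any witnessing contraction sequence is tight from the very first move: since $P(q)$ is strongly regular with parameters $(q,(q-1)/2,(q-5)/4,(q-1)/4)$, merging any two vertices $u,v$ creates a vertex with $\abs{N(u)\triangle N(v)\setminus\{u,v\}}=(q-1)/2$ red neighbors, whether or not $uv$ is an edge. So the entire challenge is to design the remaining contractions to stay within the same budget once strong regularity has been destroyed. A ``grow one super-vertex at a time'' strategy fails already for $q=9$: every three-element subset $U\subseteq V(P(9))$ has at least five vertices of mixed adjacency with $U$, exceeding the budget $(q-1)/2=4$. So the sequence must maintain several partially-grown super-vertices in parallel and merge them in a balanced way.

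The plan is to exploit the multiplicative and self-complementary structure of $P(q)$ to set up a hierarchical contraction scheme. Write $\mathbb{F}_q=\{0\}\sqcup H\sqcup gH$ with $H$ the subgroup of nonzero squares and $g$ a non-square; the map $\mu_g\colon x\mapsto gx$ interchanges $P(q)$ with its complement and pairs $H$ with $gH$, while $x\mapsto -x$ is an automorphism since $-1\in H$. I would fix a nested chain of subgroups $\{1\}=K_0<K_1<\cdots<K_m=H$ of the multiplicative group of squares and carry out the contractions in stages indexed by the chain: at stage $j$, I simultaneously merge each $K_j$-coset inside $H$ into one super-vertex and each $K_j$-coset inside $gH$ into one super-vertex, the two super-vertices being $\mu_g$-paired. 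Within a stage the merges are performed one at a time in a $\mu_g$-consistent order, and the key claim, to be verified by induction, is that the red edges at every intermediate step respect the $K_j$-coset decomposition and can be counted exactly using quadratic character sums on $\mathbb{F}_q$.

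The main obstacle will be the ``seam'' steps: merging a super-vertex inside $H$ with its $\mu_g$-image in $gH$, and eventually absorbing $0$. Since $\mu_g$ sends $P(q)$ to its complement, merging a coset $C\subseteq H$ with $gC\subseteq gH$ creates a super-vertex whose adjacency to any remaining coset is a cancellation between the adjacencies of $C$ and $gC$; this cancellation must be shown to be exactly right. A direct application of the Weil bound on quadratic character sums gives only an $O(\sqrt{q})$ approximation to the red degree, which is not sharp enough; I expect one has to reduce the verification to an identity among quadratic Gauss sums that holds uniformly for every prime power $q\equiv 1\pmod{4}$, and this is where I anticipate the technical heart of the proof to lie.
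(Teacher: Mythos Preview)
Your proposal has a genuine gap: you do not actually carry out the upper-bound verification, and the machinery you expect to need (character sums, Gauss-sum identities) is far heavier than what the problem requires. As you yourself note, Weil-type estimates only give $O(\sqrt{q})$ control, and you are left hoping for an exact identity that you never state.

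The paper's approach is both simpler and complete, and it uses the \emph{additive} rather than the multiplicative structure of $\mathbb{F}_q$. Since $q\equiv 1\pmod 4$, the map $x\mapsto -x$ is an automorphism of $P(q)$, and the contraction sequence simply pairs each nonzero $u$ with $-u$: one contracts $\{u_1,-u_1\},\{u_2,-u_2\},\ldots,\{u_{(q-1)/2},-u_{(q-1)/2}\}$ in any order. The entire argument hinges on one elementary observation: for any $i\ne j$,
\[
u_i-u_j=(-u_j)-(-u_i),
\]
so $u_iu_j\in E(P(q))$ if and only if $(-u_i)(-u_j)\in E(P(q))$. This forces the $2\times 2$ adjacency pattern between $\{u_i,-u_i\}$ and $\{u_j,-u_j\}$ to be either complete, anti-complete, or to contain at least one red edge after contracting $\{u_i,-u_i\}$. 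Consequently, once a pair $\{u_i,-u_i\}$ has been merged into a single vertex of red-degree exactly $(q-1)/2$, each subsequent contraction of another pair $\{u_j,-u_j\}$ can only merge two of its neighbors (at least one red), two non-neighbors, or two black neighbors, and in every case the red-degree does not increase. After all $(q-1)/2$ pairs are contracted, only $(q+1)/2$ vertices remain, so any completion of the sequence stays within the bound.

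So the missing idea in your sketch is precisely this negation pairing; there is no need for a subgroup chain, no need to worry about seams at $0$ or between $H$ and $gH$, and no character-sum computation at all.
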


We calculate the twin-width of random graphs $G(n,p)$ with $p\leq c/n$ for a constant $c<1$.
This determines the thresholds at which the twin-width jumps from $0$ to $1$ and from $1$ to $2$.

\begin{restatable}{THM}{smallp}\label{smallp}
	For a random graph $G:=G(n,p)$ with a function $p:=p(n)$, the following hold asymptotically almost surely.
	\begin{enumerate}
		\item $\tww(G)=0$ if $p=o(n^{-4/3})$.
		\item $\tww(G)=1$ if $p=\omega(n^{-4/3})$ and $p=o(n^{-7/6})$.
		\item $\tww(G)=2$ if $p=\omega(n^{-7/6})$ and $p \leq c/n$ for a constant $c\in(0,1)$.
	\end{enumerate}
\end{restatable}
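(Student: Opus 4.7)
My plan is to treat each part separately, identifying in each case a small subgraph whose appearance controls whether the twin-width exceeds the value of the previous part, and combining first- and second-moment arguments with structural lemmas. For part (1), I will use the well-known fact that $\tww(G)=0$ if and only if $G$ is a cograph, i.e.\ $P_4$-free. When $p=o(n^{-4/3})$ the expected number of $P_4$ subgraphs in $G$ is $\Theta(n^4 p^3)=o(1)$, so by Markov's inequality $G$ contains no $P_4$ a.a.s.\ and hence $\tww(G)=0$ a.a.s.

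For part (2), the lower bound will follow from the second moment method applied to the number of induced $P_4$'s: the expected count is $\Theta(n^4 p^3 (1-p)^3)\to\infty$ when $p=\omega(n^{-4/3})$, and since $P_4$ is strictly balanced the variance is controlled, giving an induced $P_4$ a.a.s.\ and hence $\tww(G)\ge 1$. For the upper bound I will use that $p=o(n^{-7/6})=o(1/n)$ makes the expected number of cycles tend to $0$ and the expected number of copies of any $7$-vertex tree also tend to $0$, so a.a.s.\ $G$ is a forest in which every component has at most $6$ vertices. Since twin-width of a disjoint union equals the maximum over components, it suffices to prove the structural lemma that every tree on at most $6$ vertices has twin-width at most $1$. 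This reduces to a finite case check over the six isomorphism types ($K_{1,5}$, the broom, the double broom, the spiders $S_{2,2,1}$ and $S_{3,1,1}$, and the path $P_6$); in each case an explicit contraction sequence, built by merging leaves inward in a suitable order, keeps the red degree at $1$.

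For part (3), the critical obstruction will be the spider $S_{2,2,2}$ consisting of a center with three length-two legs, so $7$ vertices and $6$ edges. First I will show $\tww(S_{2,2,2})=2$ by case analysis: every first contraction except a leaf-with-its-parent contraction produces a vertex of red degree at least $2$, and after the unique good first contraction the resulting graph still admits no further contraction of red degree less than $2$, so $\tww(S_{2,2,2})\ge 2$, and an explicit sequence achieves $2$. The expected number of induced copies of $S_{2,2,2}$ in $G(n,p)$ is $\Theta(n^7 p^6)\to\infty$ when $p=\omega(n^{-7/6})$, and since $S_{2,2,2}$ is a strictly balanced graph, the second moment method yields an induced copy a.a.s.; monotonicity of $\tww$ under induced subgraphs then gives $\tww(G)\ge 2$. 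For the upper bound $\tww(G)\le 2$ I will invoke the standard subcritical \ER structure result: for $p\le c/n$ with $c<1$, a.a.s.\ every component of $G$ has size $O(\log n)$ and is a tree or unicyclic. Combined with the lemma that trees and unicyclic graphs all have twin-width at most $2$, this yields $\tww(G)\le 2$.

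The main obstacle will be the structural upper bound for unicyclic graphs. My plan is to first reduce each tree branch hanging off the cycle to a single vertex by a leaf-to-parent contraction sequence that confines all red edges to within the branch and keeps the red degree at $1$ there, and then contract the residual cycle (of length $O(\log n)$) by a sequence modelled on the $\tww(C_5)=2$ sequence. The delicate part is interleaving the two phases so that the red edges from the tree phase do not combine with those from the cycle phase to push the red degree above $2$.
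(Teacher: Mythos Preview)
Your overall strategy is sound and close to the paper's, but there is one genuine error and a couple of places where your route diverges from the paper's in ways worth noting.

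\textbf{The error (part~(3), upper bound).} Your claim that a ``leaf-to-parent contraction sequence \dots\ keeps the red degree at $1$'' in each tree branch is false. Already for the spider $S_{2,2,2}$ itself, after contracting two leaves into their respective parents, the centre has red degree~$2$; more generally, trees can have twin-width~$2$, not~$1$, so no contraction sequence of a generic tree branch stays at red degree~$1$. The paper handles this by invoking the fact that every rooted tree admits a $2$-contraction sequence in which the root is contracted last (Proposition~\ref{prop:tree}). With this, each tree $T_i$ hanging off the cycle at $v_i$ is reduced to at most two vertices while keeping $v_i$ untouched and the maximum red degree at~$2$; since the cycle vertices are never contracted in this phase, the cycle edges remain black, and the subsequent cycle phase can then be carried out with red degree at most~$2$. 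No interleaving is needed: the two phases are simply run in sequence. Your worry about interleaving stems from the incorrect red-degree-$1$ premise.

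\textbf{Differences in approach.} For part~(2), the paper proves directly that a tree has twin-width at most~$1$ if and only if it is a caterpillar (equivalently, contains no $1$-subdivision of $K_{1,3}$), and then observes that for $p=o(n^{-7/6})$ this seven-vertex obstruction is a.a.s.\ absent. Your route---forbidding \emph{all} seven-vertex trees so that every component has at most six vertices, then case-checking---also works, since every tree on at most six vertices is a caterpillar, but the caterpillar characterisation is cleaner and reusable. For the lower bound in part~(3), you appeal to a second-moment argument for \emph{induced} copies of $S_{2,2,2}$; this can be made to work when $p\le c/n$ (the non-edge factors $(1-p)^{15}$ are $1-o(1)$ and the variance calculation goes through), but you should justify it rather than assert it. The paper sidesteps this by first finding a \emph{subgraph} copy of the $1$-subdivision of $K_{1,4}$ via the Erd\H{o}s--R\'enyi threshold theorem, and then using the a.a.s.\ pseudoforest structure to extract an induced $S_{2,2,2}$ from it, which avoids any induced-copy second-moment computation.
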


We organize this paper as follows.
In Section~\ref{sec:prelim}, we present some terminology from graph theory and probability theory, and formally define twin-width.
In Section~\ref{sec:paley}, we determine the twin-width of Paley graphs.
We prove Theorem~\ref{main1} in Section~\ref{sec:vertex} and Theorem~\ref{main2} in Section~\ref{sec:edge}.
In Section~\ref{sec:asymptotic}, we prove Theorems~\ref{main3} and~\ref{smallp}.

\section{Preliminaries}\label{sec:prelim}

In this paper, all (tri)graphs are simple and finite.
For a set $X$ and a positive integer $t$, $\binom{X}{t}$ is the set of $t$-element subsets of $X$, $[t]:=\{1,\ldots,t\}$, and $[0]:=\emptyset$.
For sets $X$ and $Y$, the \emph{symmetric difference} of $X$ and $Y$, denoted by $X\triangle Y$, is the union of $X\setminus Y$ and $Y\setminus X$.
A \emph{partition} of $X$ is a set $\{X_1,\ldots,X_t\}$ of pairwise disjoint nonempty subsets of~$X$ such that $X=\bigcup_{i=1}^tX_i$.
For a graph $G$ and disjoint vertex sets $X$ and $Y$ of $G$, $X$ is \emph{complete} to $Y$ if every vertex in $X$ is adjacent to all vertices of $Y$, and \emph{anti-complete} if every vertex in $X$ is nonadjacent to all vertices of $Y$.

A \emph{trigraph} is a triple $G=(V,B,R)$ where $B$ and $R$ are disjoint subsets of $\binom{V}{2}$.
Given a trigraph $G=(V,B,R)$, elements in $B$ are \emph{black edges} of $G$ and elements in $R$ are \emph{red edges} of $G$.
We denote the set of black edges of~$G$ by $B(G)$, the set of red edges of~$G$ by $R(G)$, and the set of edges of $G$ by $E(G)$, that is, $E(G)=B(G)\cup R(G)$.
For each vertex $v$ of $G$, the \emph{degree} of~$v$, denoted by $\deg_G(v)$, is the number of edges of $G$ incident with $v$, and the \emph{red-degree} of~$v$, denoted by $\rdeg_G(v)$, is the number of red edges of $G$ incident with $v$.
We denote by $\DeltaR(G)$ the maximum red-degree of a vertex of~$G$.
The \emph{average degree} of $G$ is $2\abs{E(G)}/\abs{V(G)}$.
For a subset $A$ of $V(G)$, $G\setminus A$ is a trigraph obtained from $G$ by removing all vertices in $A$ and all edges incident with vertices in $A$.
Let $\delta_G(A)$ be the set of edges of $G$ having one end in $A$ and the other end in $V(G)\setminus A$.
If $A=\{v\}$ for a vertex~$v$ of~$G$, then we may write $G\setminus v$ and $\delta_G(v)$ instead of $G\setminus\{v\}$ and $\delta_G(\{v\})$, respectively.
We identify a graph $H=(V,E)$ with a trigraph $H=(V,E,\emptyset)$.

\subsection{Twin-width}\label{subsec:twin}

Let $d$ be a nonnegative real.
A trigraph $G = (V,B,R)$ is a \emph{$d$-trigraph} if $\DeltaR(G)\leq d$.
For distinct vertices $u$ and $v$ of~$G$, not necessarily adjacent, $G/\{u,v\}=(V',B',R')$ is a trigraph obtained from $G$ such that $V'=(V\setminus\{u,v\})\cup\{w\}$ for a new vertex~$w$, $G\setminus\{u,v\}=(G/\{u,v\})\setminus w$, and for every vertex $x$ in $V'\setminus\{w\}$, the following hold:
\begin{enumerate}[(i)]
	\item $wx\in B'$ if and only if $\{ux,vx\}\subseteq B$,
	\item $wx\notin B'\cup R'$ if and only if $\{ux,vx\}\cap(B\cup R)=\emptyset$, and
	\item	$wx\in R'$ otherwise.
\end{enumerate}
We say that $G/\{u,v\}$ is obtained from $G$ by \emph{contracting~$u$ and~$v$} into a new vertex~$w$.
We often identify $w$ with either $u$ or $v$.
A \emph{partial contraction sequence} from $G_1$ to $G_t$ is a sequence $G_1,\ldots,G_t$ of trigraphs such that for each $i\in[t-1]$, $G_{i+1}$ is obtained from $G_i$ by contracting two distinct vertices.
A \emph{contraction sequence} of $G$ is a partial contraction sequence from~$G$ to a one-vertex graph.
A \emph{(partial) $d$-contraction sequence} is a (partial) contraction sequence such that every trigraph in the sequence is a $d$-trigraph.
The \emph{twin-width} of $G$, denoted by $\tww(G)$, is the minimum~$d$ such that $G$ admits a $d$-contraction sequence.

A partial contraction sequence $\sigma$ from $G_1$ to $G_t$ defines a function from $V(G_1)$ to $V(G_t)$, which we denote also by $\sigma$, as follows: for a vertex $v$ of~$G_1$, $\sigma(v)$ is the vertex of $G_t$ to which $v$ is contracted.
If $v$ was never contracted, then $\sigma(v)=v$.
For a subset $X$ of $V(G_t)$, we define
\begin{linenomath*}\[
	\sigma^{-1}(X)=\{v\in V(G_1):\sigma(v)\in X\}.
\]\end{linenomath*}
For a vertex $w$ of $G_t$, we may write $\sigma^{-1}(w)$ for $\sigma^{-1}(\{w\})$.
In other words, $\sigma^{-1}(w)$ is the set of vertices of~$G_1$ which are eventually contracted into $w$ in $G_t$.
We have the following two simple lemmas.

\begin{LEM}\label{lem:contraction}
	Let $\sigma$ be a partial contraction sequence from $G$ to $G'$.
	Then for every vertex $v$ of $G'$,
	\begin{linenomath*}\[
		\deg_{G'}(v)\leq\abs{\delta_G(\sigma^{-1}(v))}\leq\sum_{w\in\sigma^{-1}(v)}\deg_G(w).
	\]\end{linenomath*}
\end{LEM}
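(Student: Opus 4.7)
Write $S_v := \sigma^{-1}(v)$ for each $v \in V(G')$, so $\{S_v : v \in V(G')\}$ partitions $V(G)$. The plan is to dispatch the right-hand inequality by a one-line counting argument and then reduce the left-hand inequality to an auxiliary claim that edges of $G'$ are witnessed by edges of $G$ between preimage classes, which I would prove by induction on the length of $\sigma$.

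The right-hand bound is essentially a double-count: every edge in $\delta_G(S_v)$ has exactly one endpoint in $S_v$ and so is counted exactly once on the right-hand side $\sum_{w \in S_v}\deg_G(w)$, while any edge with both endpoints in $S_v$ is counted twice on the right and not at all in $\delta_G(S_v)$. For the left-hand bound, the core auxiliary claim is: \emph{for every edge $vu \in E(G')$ there exist $x \in S_v$ and $y \in S_u$ with $xy \in E(G)$}. Granting this, since the sets $S_u$ for $u \neq v$ are pairwise disjoint, the witnessing edges produced by the $\deg_{G'}(v)$ different $G'$-neighbours $u$ of $v$ are themselves distinct elements of $\delta_G(S_v)$, which yields $\deg_{G'}(v) \leq \abs{\delta_G(S_v)}$.

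I would prove the core claim by induction on the length $k$ of the partial contraction sequence $\sigma$. The base case $k = 0$ is immediate because $S_v = \{v\}$ and $S_u = \{u\}$. For the inductive step, factor $\sigma$ as a first contraction $G \to G_1$ merging vertices $a, b \in V(G)$ into some new vertex $c \in V(G_1)$, followed by a shorter sequence $\tau$ from $G_1$ to $G'$; by the inductive hypothesis applied to $\tau$, there is an edge $xy \in E(G_1)$ with $x \in \tau^{-1}(v)$ and $y \in \tau^{-1}(u)$. If neither $x$ nor $y$ equals $c$, then $xy$ is already an edge of $G$ with $x \in S_v$ and $y \in S_u$, so we are done. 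Otherwise, say $x = c$; the contraction rules ensure that the edge $cy \in E(G_1)$ forces at least one of $ay, by$ to be an edge of $G$, and since $c \in \tau^{-1}(v)$ gives $\{a,b\} \subseteq S_v$, either choice serves as a witness.

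I expect no serious conceptual obstacle; the only care needed is the bookkeeping relation $S_v = (\tau^{-1}(v) \setminus \{c\}) \cup \{a,b\}$ when $c \in \tau^{-1}(v)$ (and $S_v = \tau^{-1}(v)$ otherwise), which is what makes the induction splice correctly through the first contraction, together with the mild case split on whether an endpoint of the $G_1$-witness equals the merged vertex $c$.
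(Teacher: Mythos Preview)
Your proposal is correct and follows essentially the same approach as the paper: both arguments establish the left-hand inequality by assigning to each edge $vw\in\delta_{G'}(v)$ a witness edge of $G$ between $\sigma^{-1}(v)$ and $\sigma^{-1}(w)$, and then observe that distinct neighbours $w$ give distinct witnesses because the sets $\sigma^{-1}(w)$ are pairwise disjoint. The only difference is one of explicitness: the paper simply asserts the existence of such a witness edge and says nothing about the right-hand inequality, whereas you supply an inductive proof of the witness claim and spell out the double-count for the right-hand bound.
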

\begin{proof}
	For each $e:=vw\in\delta_{G'}(v)$, there is at least one edge $f(e)$ of $G$ having one end in $\sigma^{-1}(v)$ and the other end in $\sigma^{-1}(w)$.
	Since $\sigma^{-1}(v)$ and $\sigma^{-1}(w)$ are disjoint, $f(e)$ is in $\delta_G(\sigma^{-1}(v))$.
	For $e':=vw'\in\delta_{G'}(v)$ distinct from $e$, $\sigma^{-1}(w)$ and $\sigma^{-1}(w')$ are disjoint, and therefore $f(e)$ and $f(e')$ are distinct.
	Thus, $f:\delta_{G'}(v)\rightarrow\delta_G(\sigma^{-1}(v))$ is an injection, and this completes the proof.
\end{proof}

The following lemma is immediate from the definition.

\begin{LEM}\label{lem:contraction2}
	Let $\sigma$ be a partial contraction sequence from $G$ to $G'$.
	If $v$ is a vertex of $G'$ with $\abs{\sigma^{-1}(v)}=1$, then $\rdeg_{G'}(v)\leq\rdeg_G(v)+t$, where $t$ is the number of vertices $w$ of $G'$ with $\abs{\sigma^{-1}(w)}>1$.\qed
\end{LEM}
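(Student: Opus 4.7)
The plan is to count the red edges at $v$ in $G'$ by splitting them according to whether the other endpoint has a singleton preimage or not. The guiding observation is that a single contraction step $G_i \to G_{i+1}$ contracting a pair $\{a,b\}$ into a new vertex $w$ only touches edges incident with $w$ in $G_{i+1}$: rules (i)--(iii) in the definition of $G/\{a,b\}$ prescribe colors only for edges $wx$, so any edge whose both endpoints lie in $V(G_i)\setminus\{a,b\}$ is carried over to $G_{i+1}$ unchanged (both in existence and in color).

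Iterating this single-step observation along $\sigma$, I would establish the following claim: for any two distinct vertices $x, y$ of $G'$ with $\lvert\sigma^{-1}(x)\rvert = \lvert\sigma^{-1}(y)\rvert = 1$, the edge $xy$ is red (respectively black, respectively absent) in $G'$ if and only if it is red (respectively black, respectively absent) in $G$, since at no point in the sequence does a contraction touch either $x$ or $y$. A quick induction on the length of the partial contraction sequence suffices.

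With that claim in hand, I partition the red neighbors of $v$ in $G'$ into two sets: those $u$ with $\lvert\sigma^{-1}(u)\rvert = 1$ and those with $\lvert\sigma^{-1}(u)\rvert > 1$. By the claim, the former set injects into the red neighborhood of $v$ in $G$ and hence contributes at most $\rdeg_G(v)$. The latter set has size at most $t$ by the definition of $t$. Summing gives $\rdeg_{G'}(v) \leq \rdeg_G(v) + t$. There is no real obstacle here; the only point meriting a line of justification is the invariance of untouched edges under a single contraction, which is a direct reading of clauses (i)--(iii).
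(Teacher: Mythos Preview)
Your argument is correct and is exactly the reasoning the paper leaves implicit: the paper states that the lemma ``is immediate from the definition'' and gives no further proof, and what you have written is precisely the unpacking of that immediacy---edges between two never-contracted vertices persist with unchanged color, so red neighbors of $v$ in $G'$ split into at most $\rdeg_G(v)$ singleton-preimage vertices plus at most $t$ non-singleton ones.
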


\subsection{Random graphs}

For a positive integer $n$ and $0\leq p\leq1$, the \emph{\ER random graph} $G:=G(n,p)$ is a probability space over the set of graphs with vertex set $[n]$ such that for all $i,j\in[n]$ with $i<j$, the event that $i$ is adjacent to $j$ occurs with probability $p$, and these events are mutually independent.
For a function $p : \mathbb{N} \to [0,1]$, we say that the random graph $G(n,p(n))$ has a graph property $A$ \emph{asymptotically almost surely} if $\Pr[G(n,p(n))\models A]\rightarrow1$ as $n\rightarrow\infty$.

We will use the following inequality to investigate the asymptotic behavior of the twin-width of random graphs.

\begin{LEM}[Chernoff bound; see~{\cite[Theorem~2.1]{randomgraphs}}]\label{lem:Chernoff}
	Let $X_1,\ldots,X_n$ be mutually independent random variables such that for each $i\in[n]$, $\Pr[X_i=1]=p$ and $\Pr[X_i=0]=1-p$.
	Let $X:=\sum_{i=1}^nX_i$.
	Then
	\begin{linenomath*}\[
		\Pr[X-\E[X]\leq-t]\leq\exp\left(-\frac{t^2}{2np}\right)
	\]\end{linenomath*}
	for any $t\geq0$.
\end{LEM}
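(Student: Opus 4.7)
The plan is to use the standard exponential-moment (Chernoff) method. Fix $\lambda>0$. Since $x\mapsto e^{-\lambda x}$ is decreasing, Markov's inequality gives
\begin{linenomath*}\[
    \Pr[X-\E[X]\leq -t]=\Pr[e^{-\lambda(X-\E[X])}\geq e^{\lambda t}]\leq e^{-\lambda t}\,\E[e^{-\lambda(X-\E[X])}].
\]\end{linenomath*}
By the mutual independence of $X_1,\ldots,X_n$, the moment generating function factors:
\begin{linenomath*}\[
    \E[e^{-\lambda(X-\E[X])}]=\prod_{i=1}^n \E[e^{-\lambda(X_i-p)}].
\]\end{linenomath*}

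Next I would bound each factor. For a Bernoulli variable $X_i$ with parameter $p$,
\begin{linenomath*}\[
    \E[e^{-\lambda(X_i-p)}]=e^{\lambda p}\bigl(pe^{-\lambda}+(1-p)\bigr).
\]\end{linenomath*}
Using the elementary inequality $1+x\leq e^x$ with $x=p(e^{-\lambda}-1)$, I get $pe^{-\lambda}+(1-p)\leq\exp\bigl(p(e^{-\lambda}-1)\bigr)$, hence
\begin{linenomath*}\[
    \E[e^{-\lambda(X_i-p)}]\leq\exp\bigl(p(e^{-\lambda}-1+\lambda)\bigr).
\]\end{linenomath*}
Combining with the Markov step yields
\begin{linenomath*}\[
    \Pr[X-\E[X]\leq-t]\leq\exp\bigl(-\lambda t+np(e^{-\lambda}-1+\lambda)\bigr).
\]\end{linenomath*}

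Finally I would invoke the inequality $e^{-\lambda}-1+\lambda\leq \lambda^2/2$, valid for all $\lambda\geq 0$ (easy to verify by comparing Taylor series, or by noting that the derivative $\lambda-1+e^{-\lambda}$ is nonnegative and vanishes at $0$). This gives
\begin{linenomath*}\[
    \Pr[X-\E[X]\leq-t]\leq \exp\!\left(-\lambda t+\frac{np\lambda^2}{2}\right).
\]\end{linenomath*}
Optimizing by choosing $\lambda=t/(np)$ (which is nonnegative since $t\geq 0$) makes the exponent equal to $-t^2/(2np)$, yielding the claim. The only mildly delicate step is verifying $e^{-\lambda}-1+\lambda\leq\lambda^2/2$ for $\lambda\geq 0$; everything else is Markov plus independence. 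The case $t=0$ is trivial, and the case $p=0$ is vacuous since then $X\equiv 0$, so no separate treatment is needed.
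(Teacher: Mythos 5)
Your proof is correct, and it is the standard exponential-moment (Chernoff) argument: Markov's inequality applied to $e^{-\lambda(X-\E X)}$, factorization by independence, the bound $1+x\leq e^x$, the elementary inequality $e^{-\lambda}-1+\lambda\leq\lambda^2/2$ for $\lambda\geq 0$, and optimization at $\lambda=t/(np)$. Note, however, that the paper does not actually prove this lemma; it is cited verbatim as Theorem 2.1 of Janson, \L{}uczak, and Ruci\'{n}ski, so there is no in-paper proof to compare against. The textbook derives the same Gaussian-type tail bound via the exponential-moment method as well (passing through the function $\varphi(x)=(1+x)\ln(1+x)-x$ before specializing to the quadratic bound), so your route is essentially the standard one, just going directly to the quadratic estimate on the cumulant rather than through $\varphi$. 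Your handling of the edge cases $t=0$ and $p=0$ is fine.
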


\subsection{Lattice paths}

For points $\mathbf{x}$ and $\mathbf{y}$ in $\mathbb{Z}^2$, a \emph{lattice path} from $\mathbf{x}$ to $\mathbf{y}$ is a sequence $\mathbf{p}_0,\ldots,\mathbf{p}_k$ of points in $\mathbb{Z}^2$ such that $\mathbf{p}_0 = \mathbf{x}$, $\mathbf{p}_k = \mathbf{y}$, and $\mathbf{p}_i-\mathbf{p}_{i-1}\in\{(0,\pm1),(\pm1,0)\}$ for each $i\in[k]$.
A lattice path $\mathbf{p}_0,\ldots,\mathbf{p}_k$ is \emph{North-East} if $\mathbf{p}_i-\mathbf{p}_{i-1}\in\{(0,1),(1,0)\}$ for each $i\in[k]$.
We say that a lattice path $\mathbf{p}_0,\ldots,\mathbf{p}_k$ \emph{intersects} a line $y=ax+b$ if $\{\mathbf{p}_0,\ldots,\mathbf{p}_k\}$ intersects both $\{(x,y):y\geq ax+b\}$ and $\{(x,y):y\leq ax+b\}$.
Observe that if $\mathbf{p}_i$ is on the line $y=ax+b$ for some $i\in[k]\cup\{0\}$, then the lattice path $\mathbf{p}_0,\ldots,\mathbf{p}_k$ intersects the line, but the converse is not necessarily true if $(a,b)\notin\{-1,0,1\}\times\mathbb{Z}$.

\begin{PROP}[See~{\cite[Theorem~10.3.1]{Bona2015}}]\label{prop:latticepath}
	Let $a$, $b$, and $t$ be nonnegative integers with $a+t>b$.
	Then $\binom{a+b}{a+t}$ is the number of North-East lattice paths from $(0,0)$ to $(a,b)$ intersecting the line $y=x+t$.\footnote[4]{We remind the readers that for integers $n$ and $m$ with $m>n\geq 0$, $\binom{n}{m}$ is defined as $0$.}
\end{PROP}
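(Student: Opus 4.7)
The plan is to apply the classical reflection principle (André's method). First I would reduce the problem to counting NE paths that touch the line $y = x + t$ at a lattice point: since a NE step changes $y - x$ by exactly $\pm 1$, and since such a path starts with $y - x = 0$ and ends with $y - x = b - a < t$, the path intersects the line if and only if $y - x = t$ at some lattice point along it. The degenerate case $t = 0$ is immediate because $(0, 0)$ already lies on the line, giving $\binom{a+b}{a}$ qualifying paths, in agreement with the claimed formula.

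Now assume $t \geq 1$. I would construct a bijection between NE paths from $(0, 0)$ to $(a, b)$ touching $y = x + t$ and NE paths from $(-t, t)$ to $(a, b)$. Given a path $P$ in the former set, let $\mathbf{q}$ be the first lattice point of $P$ on the line, and replace the initial segment of $P$ from $(0, 0)$ to $\mathbf{q}$ by its reflection across $y = x + t$. This reflection sends $(x, y)$ to $(y - t, x + t)$; in particular it maps $(0, 0)$ to $(-t, t)$ and swaps unit East with unit North steps, so the reflected segment is again a NE path, and concatenating with the unchanged tail from $\mathbf{q}$ to $(a, b)$ yields a NE path from $(-t, t)$ to $(a, b)$. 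For the inverse, every NE path from $(-t, t)$ to $(a, b)$ begins with $y - x = 2t \geq t$ and ends with $y - x < t$, so it too must pass through a lattice point on $y = x + t$; reflecting the initial segment up to the first such point recovers a path in the original domain.

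The number of NE paths from $(-t, t)$ to $(a, b)$ uses $a + t$ East and $b - t$ North steps out of $a + b$ total, and therefore equals $\binom{a + b}{a + t}$; this also evaluates to $0$ precisely when $b < t$, consistent with the fact that no NE path to $(a, b)$ can reach $y = x + t$ in that case. The main points to verify carefully are that the reflection preserves the NE property (immediate from the coordinate swap) and that $\mathbf{q}$ is well-defined in both domains; neither is a genuine obstacle, so the argument is essentially a routine application of the reflection principle.
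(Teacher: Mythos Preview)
Your argument via Andr\'{e}'s reflection principle is correct and is the standard proof of this fact. Note, however, that the paper does not supply its own proof of this proposition: it is stated with a citation to \cite[Theorem~10.3.1]{Bona2015} and used as a black box, so there is nothing in the paper to compare against. Your write-up is essentially the proof one finds in the cited reference.
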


The following corollary will help us to prove Theorem~\ref{main1}.

\begin{COR}\label{cor:latticepath2}
	Let $a$ and $b$ be nonnegative integers and $t$ be a positive integer.
	Let $A$ be the event that a North-East lattice path from $(0,0)$ to $(a,b)$ chosen uniformly at random intersects the line $y=x+t$.
	Then $\Pr[A]\leq(b/(a+t))^t$.
\end{COR}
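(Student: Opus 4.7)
The plan is to use Proposition~\ref{prop:latticepath} directly: it gives the number of intersecting paths as $\binom{a+b}{a+t}$, so $\Pr[A]$ becomes a ratio of binomial coefficients, which I will then bound factor-by-factor against $b/(a+t)$.

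First I would handle two trivial regimes. If $b\geq a+t$, then the endpoint $(a,b)$ lies on or above the line $y=x+t$ while $(0,0)$ lies strictly below it (because $t\geq 1$), so \emph{every} North-East path from $(0,0)$ to $(a,b)$ intersects the line; hence $\Pr[A]=1$, and the bound is automatic since $b/(a+t)\geq 1$. If instead $b<t$, then the $y$-coordinate of any point on such a path is at most $b<t\leq x+t$ (using $x\geq 0$), so no point satisfies $y\geq x+t$; hence $\Pr[A]=0\leq (b/(a+t))^t$.

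In the remaining range $t\leq b<a+t$, the total number of North-East paths from $(0,0)$ to $(a,b)$ is $\binom{a+b}{a}$, and by Proposition~\ref{prop:latticepath} the number of these intersecting $y=x+t$ is $\binom{a+b}{a+t}$. A short simplification then gives
\begin{linenomath*}\[
    \Pr[A] \;=\; \frac{\binom{a+b}{a+t}}{\binom{a+b}{a}} \;=\; \frac{a!\,b!}{(a+t)!\,(b-t)!} \;=\; \prod_{i=0}^{t-1}\frac{b-i}{a+t-i}.
\]\end{linenomath*}

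It then remains to verify that each factor $(b-i)/(a+t-i)$ with $0\leq i\leq t-1$ is at most $b/(a+t)$. Both denominators are positive (as $a+t-i\geq a+1>0$), so clearing them reduces the inequality to $i(a+t-b)\geq 0$, which holds since $a+t>b$ and $i\geq 0$. Multiplying these $t$ inequalities yields $\Pr[A]\leq (b/(a+t))^t$. No substantial obstacle is anticipated: once Proposition~\ref{prop:latticepath} is invoked, the rest is an elementary manipulation of a product of fractions.
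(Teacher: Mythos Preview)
Your proof is correct and follows essentially the same approach as the paper: dispose of the degenerate regimes $b<t$ and $b\geq a+t$, apply Proposition~\ref{prop:latticepath} to write $\Pr[A]=\prod_{i=0}^{t-1}(b-i)/(a+t-i)$, and bound each factor by $b/(a+t)$ using $a+t>b$. The only cosmetic difference is that the paper handles the case $b\geq a+t$ by observing that the right-hand side is already at least~$1$, rather than by arguing that $\Pr[A]=1$.
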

\begin{proof}
	We may assume that $0<t\leq b$, because otherwise $\Pr[A]=0$.
	We may assume that $a+t>b$, because otherwise $(b/(a+t))^t\geq1$.
	Since $\binom{a+b}{a}$ is the number of North-East lattice paths from $(0,0)$ to $(a,b)$, by Proposition~\ref{prop:latticepath},
	\begin{linenomath*}\[
		\Pr[A]
		=\frac{\binom{a+b}{a+t}}{\binom{a+b}{a}}
		=\frac{(a+b)!}{(a+t)!\cdot(b-t)!}\cdot\frac{a!\cdot b!}{(a+b)!}
		=\prod_{i=0}^{t-1}\frac{b-i}{a+t-i}
		\leq\left(\frac{b}{a+t}\right)^t,
	\]\end{linenomath*}
	where the last inequality holds because $a+t>b$.
\end{proof}

\section{Twin-width of Paley graphs}\label{sec:paley}

The following lemma follows immediately from the definition of the twin-width.

\begin{LEM}\label{lem:firststep}
	Every $n$-vertex graph $G$ with $n\geq2$ has twin-width at least $\min\{\abs{(N_G(v)\triangle N_G(w))\setminus\{v,w\}}:v,w\in V(G),\ v\neq w\}$.\qed
\end{LEM}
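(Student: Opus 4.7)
The plan is to unpack the definition of twin-width and examine what must happen at the very first contraction. Any contraction sequence of $G$ begins by picking two distinct vertices $u,v\in V(G)$ and contracting them into a new vertex $w$. I would argue that the red-degree of $w$ in the resulting trigraph $G/\{u,v\}$ is exactly $\abs{(N_G(u)\triangle N_G(v))\setminus\{u,v\}}$, and therefore any $d$-contraction sequence for $G$ forces $d$ to be at least this quantity for some pair $\{u,v\}$, which gives the desired bound.

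Concretely, I would first apply the definition of contraction in Section~\ref{subsec:twin}: for each $x\in V(G)\setminus\{u,v\}$, the edge $wx$ is red in $G/\{u,v\}$ if and only if exactly one of $ux,vx$ is an edge of $G$. Since $G$ has no red edges to begin with (being identified with the trigraph $(V(G),E(G),\emptyset)$), the set of red neighbors of $w$ is precisely $(N_G(u)\triangle N_G(v))\setminus\{u,v\}$. Hence
\begin{linenomath*}\[
\DeltaR(G/\{u,v\})\geq\rdeg_{G/\{u,v\}}(w)=\abs{(N_G(u)\triangle N_G(v))\setminus\{u,v\}}.
\]\end{linenomath*}

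Now suppose $\tww(G)=d$ and fix a $d$-contraction sequence $G=G_1,G_2,\ldots,G_n$. Since $G_2=G/\{u,v\}$ for some distinct $u,v\in V(G)$ and $G_2$ is a $d$-trigraph, the inequality above yields
\begin{linenomath*}\[
d\geq\abs{(N_G(u)\triangle N_G(v))\setminus\{u,v\}}\geq\min_{v'\neq w'}\abs{(N_G(v')\triangle N_G(w'))\setminus\{v',w'\}},
\]\end{linenomath*}
which is exactly the claimed lower bound. There is no real obstacle here: the entire argument is just a careful reading of the definitions, so the only thing to be careful about is the mild bookkeeping that the set $(N_G(u)\triangle N_G(v))\setminus\{u,v\}$ already correctly excludes $u$ and $v$ themselves (the vertices being contracted), matching the red-degree count exactly.
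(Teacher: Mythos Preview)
Your argument is correct and is exactly the intended one: the paper states the lemma with a \qed{} and no proof, since it follows immediately from the definition of twin-width by inspecting the red-degree created at the first contraction, which is precisely what you wrote out.
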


We provide an infinite class of graphs such that every $n$-vertex graph in the class has twin-width at least $(n-1)/2$.
A \emph{conference graph} is a graph~$G$ on $n$ vertices for a positive integer $n$ with $n\equiv 1\pmod{4}$ such that $G$ is $((n-1)/2)$-regular and for each pair $\{v,w\}$ of distinct vertices of $G$, $\abs{N_G(v)\cap N_G(w)}$ is $(n-5)/4$ if $v$ and $w$ are adjacent, and otherwise $(n-1)/4$.
Observe that in an $n$-vertex conference graph $G$, for distinct vertices $v$ and $w$ of $G$, $\abs{(N_G(v)\triangle N_G(w))\setminus\{v,w\}}=(n-1)/2$.
Thus, by Lemma~\ref{lem:firststep}, every $n$-vertex conference graph has twin-width at least $(n-1)/2$.

For each prime power $q$ with $q\equiv 1\pmod{4}$, the \emph{Paley graph} $P(q)$ is a graph on the field $\mathbb{F}_q$ of order $q$ such that vertices $a$ and $b$ are adjacent in $P(q)$ if and only if $a-b = c^2$ for some $c\in \mathbb{F}_q \setminus \{0\}$.
We remark that $-1$ is a square in $\mathbb{F}_q$ for a prime power $q$ with $q\equiv 1\pmod{4}$, so that the Paley graph $P(q)$ is well defined as an undirected graph.
Erd\H{o}s and R\'{e}nyi~\cite{ER1963} showed that every Paley graph is a conference graph.

We now prove Theorem~\ref{Paley}.

\Paley*

\begin{proof}
	Since every Paley graph is a conference graph, by Lemma~\ref{lem:firststep}, it suffices to show that $\tww(P(q)) \leq (q-1)/2$.
	Let $u_1,\ldots,u_{(q-1)/2}$ be elements in $\mathbb{F}_q\setminus\{0\}$ such that for all distinct $i,j\in[(q-1)/2]$, $u_i\neq u_j$ and $u_i\neq-u_j$.
	Let $G_0:=P(q)$ and for each $k\in[(q-1)/2]$, let $G_k:=G_{k-1}/\{u_k,-u_k\}$.
	We will identify the vertex created by contracting $u_k$ and $-u_k$ with $u_k$.
	Since $\abs{V(G_{(q-1)/2})}=(q+1)/2$, it is enough to show that $\DeltaR(G_k)\leq(q-1)/2$ for each $k\in[(q-1)/2]$.

	By Lemma~\ref{lem:contraction2}, for each $u\in V(G_k)\setminus\{u_1,\ldots,u_k\}$, we have $\rdeg_{G_k}(u)\leq k\leq(q-1)/2$.
	Let $i\in[k]$ and $Q_i:=G_0/\{u_i,-u_i\}$.
	We will again identify the new vertex created by contracting $u_i$ and $-u_i$ with $u_i$.
	Since $G_0$ is a conference graph, we have $\rdeg_{Q_i}(u_i)=(q-1)/2$.
	We show that for each $j\in[k]\setminus\{i\}$, one of the following holds.
	\begin{enumerate}[(i)]
		\item $Q_i$ has black edges from $u_i$ to both $u_j$ and $-u_j$.
		\item $u_i$ is nonadjacent to both $u_j$ and $-u_j$ in $Q_i$.
		\item $Q_i$ has at least one red edge from $u_i$ to $u_j$ or $-u_j$.
	\end{enumerate}
	Suppose that $Q_i$ has no red edges from $u_i$ to $u_j$ or $-u_j$.
	Then each of $u_j$ and $-u_j$ is either complete or anti-complete to $\{u_i,-u_i\}$ in $G_0$.
	Since $u_i-u_j=(-u_j)-(-u_i)$, by the definition of the Paley graph, $u_i$ and $u_j$ are adjacent in $G_0$ if and only if $-u_i$ and $-u_j$ are adjacent in $G_0$.
	Therefore, $\{u_j,-u_j\}$ is either complete or anti-complete to $\{u_i,-u_i\}$ in $G_0$.
	This implies that (i) or (ii) holds.
	There is a partial contraction sequence from $Q_i$ to $G_k$ by contracting pairs of the form $\{u_j,-u_j\}$ for all $j\in[k]\setminus\{i\}$.
	We observe from (i), (ii), and (iii) that in this sequence, the red-degree of $u_i$ does not increase.
	Therefore, $\rdeg_{G_k}(u_i)\leq\rdeg_{Q_i}(u_i)$, so that $\DeltaR(G_k)\leq(q-1)/2$, and this completes the proof.
\end{proof}

\section{Twin-width versus the number of vertices}\label{sec:vertex}

For an $n$-vertex graph, the first step of our strategy to prove Theorem~\ref{main1} will be to use the following proposition to find more than $(n-\sqrt{n})/2$ disjoint pairs of vertices such that contracting any one of these pairs creates a trigraph whose maximum red-degree is at most $(n+\sqrt{n}-1)/2$.
Later, we will show that contracting these pairs in a random order will create no intermediate trigraphs having too large red-degree with positive probability.

\begin{PROP}\label{prop:L}
	For an $n$-vertex graph $G$, every subset $L$ of $V(G)$ with $\abs{L}\geq2$ contains distinct vertices $u$ and $v$ such that
	\begin{linenomath*}\[
		\abs{(N_G(u)\triangle N_G(v))\setminus\{u,v\}}\leq\frac{n}{2}+\frac{n-1}{2\abs{L}-2}-1.
	\]\end{linenomath*}
\end{PROP}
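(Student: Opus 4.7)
The plan is a standard averaging argument on pairs in $L$. Setting $\ell := \abs{L}$, I will upper bound
\begin{linenomath*}\[
S := \sum_{\{u,v\} \in \binom{L}{2}} \bigl|(N_G(u) \triangle N_G(v)) \setminus \{u,v\}\bigr|
\]\end{linenomath*}
and then invoke pigeonhole: some pair $\{u,v\} \in \binom{L}{2}$ achieves at most the average $S/\binom{\ell}{2}$.

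To estimate $S$, I would swap the order of summation so that the outer sum ranges over vertices $w \in V(G)$. For each such $w$, let $k_w := \abs{N_G(w) \cap L}$; then $w$ contributes $1$ to the summand indexed by $\{u,v\}$ precisely when $w \notin \{u,v\}$ and $w$ is adjacent to exactly one of $u,v$. If $w \notin L$, the number of such pairs is $k_w(\ell - k_w)$. If $w \in L$, the extra requirement $w \notin \{u,v\}$ decreases the count to $k_w(\ell - 1 - k_w)$. Applying the elementary inequality $k(m-k) \le m^2/4$ to each term, I obtain
\begin{linenomath*}\[
S \;\le\; (n-\ell)\cdot\frac{\ell^2}{4} \;+\; \ell\cdot\frac{(\ell-1)^2}{4}.
\]\end{linenomath*}

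The remaining step is algebraic: dividing by $\binom{\ell}{2} = \ell(\ell-1)/2$ and combining over the common denominator $2(\ell-1)$ yields
\begin{linenomath*}\[
\frac{S}{\binom{\ell}{2}} \;\le\; \frac{(n-\ell)\ell + (\ell-1)^2}{2(\ell-1)} \;=\; \frac{n\ell - 2\ell + 1}{2(\ell-1)} \;=\; \frac{n}{2} + \frac{n-1}{2\ell - 2} - 1,
\]\end{linenomath*}
which matches the bound in the statement exactly. I don't expect any real obstacle here; the only point requiring care is the case split $w \in L$ vs.\ $w \notin L$ in the contribution count, since the constraint $w \notin \{u,v\}$ shrinks $L$ by one when $w \in L$ and therefore changes the parabolic factor from $\ell^2/4$ to $(\ell-1)^2/4$. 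The fact that the averaging bound coincides with the target rather than falling short suggests that this double-counting approach is the intended route, and that no finer argument (such as discarding vertices of extreme degree from $L$) should be needed.
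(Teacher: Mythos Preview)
Your proposal is correct and essentially identical to the paper's proof: the paper phrases the same double-counting as an expectation over a uniformly random pair $\{u,v\}\in\binom{L}{2}$, performs the same case split on $w\in L$ versus $w\notin L$ with the same counts $k_w(\ell-1-k_w)$ and $k_w(\ell-k_w)$, and bounds each by $(\ell-1)^2/4$ and $\ell^2/4$ respectively to arrive at the same expression. There is no substantive difference in approach.
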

\begin{proof}
	Let $\ell:=\abs{L}$.
	The number of elements $\{u,v\}$ in $\binom{L}{2}$ with $w\in(N_G(u)\triangle N_G(v))\setminus\{u,v\}$ is $\abs{L\cap N_G(w)}\cdot(\abs{L\setminus N_G(w)}-1)$ if $w\in L$, and is $\abs{L\cap N_G(w)}\cdot\abs{L\setminus N_G(w)}$ otherwise.
	We choose a pair $\{u,v\}$ from $\binom{L}{2}$ uniformly at random.
	Then
	\begin{linenomath*}\postdisplaypenalty=0\begin{align*}
		&\E[\abs{(N_G(u)\triangle N_G(v))\setminus\{u,v\}}]\\
		&=\sum_{w\in L} \frac{\abs{L\cap N_G(w)}\cdot(\abs{L \setminus N_G(w)}-1)}{\binom{\ell}{2}}
		+\sum_{w\in V(G)\setminus L}\frac{\abs{L\cap N_G(w)}\cdot\abs{L\setminus N_G(w)}}{\binom{\ell}{2}}\\
		&\leq\ell\cdot\frac{\left(\frac{\ell-1}{2}\right)^2}{\binom{\ell}{2}}+(n-\ell)\cdot\frac{\left(\frac{\ell}{2}\right)^2}{\binom{\ell}{2}}=\frac{n}{2}+\frac{n-1}{2\ell-2}-1.
	\end{align*}\end{linenomath*}
	Thus, $\binom{L}{2}$ contains an element $\{u',v'\}$ with $\abs{(N_G(u')\triangle N_G(v'))\setminus\{u',v'\}}\leq n/2+(n-1)/(2\ell-2)-1$.
\end{proof}

We remark that applying Proposition~\ref{prop:L} with $L=V(G)$ yields the following corollary, which indicates that the lower bound in Lemma~\ref{lem:firststep} is at most $(n-1)/2$ for an $n$-vertex graph with $n\geq2$.

\begin{COR}
	Every $n$-vertex graph $G$ with $n\geq2$ has distinct vertices $v$ and $w$ such that $\abs{(N_G(v)\triangle N_G(w))\setminus\{v,w\}}\leq(n-1)/2$.\qed
\end{COR}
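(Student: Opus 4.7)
The plan is to simply specialize Proposition~\ref{prop:L} to the case $L=V(G)$. The hypothesis $n\geq 2$ guarantees $\abs{L}=n\geq 2$, which is the only requirement of the proposition, so it produces distinct vertices $u,v\in V(G)$ with
\[
	\abs{(N_G(u)\triangle N_G(v))\setminus\{u,v\}} \leq \frac{n}{2} + \frac{n-1}{2n-2} - 1.
\]
A one-line simplification then finishes the argument: the fraction $(n-1)/(2n-2)$ equals $1/2$, so the right-hand side collapses to $n/2 + 1/2 - 1 = (n-1)/2$, matching the claimed bound exactly.

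There is no genuine obstacle, since the statement is a direct specialization of the preceding proposition. The only observation worth making is that the bound in Proposition~\ref{prop:L} is decreasing in $\abs{L}$ (through the term $(n-1)/(2\abs{L}-2)$), so taking $L=V(G)$ is the choice that extracts the tightest conclusion from the proposition. In particular, even though a more clever choice of a smaller subset $L$ might be useful for other purposes (as will be needed later in Section~\ref{sec:vertex} to control successive contractions), here we simply want the sharpest single-pair guarantee, and maximizing $\abs{L}$ achieves that.
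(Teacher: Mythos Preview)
Your proof is correct and matches the paper's approach exactly: the paper states explicitly that the corollary follows by applying Proposition~\ref{prop:L} with $L=V(G)$, and your simplification $(n-1)/(2n-2)=1/2$ is precisely what is needed.
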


For any ordering of the pairs obtained by Proposition~\ref{prop:L}, the following simple lemma presents an upper bound for the twin-width.

\begin{LEM}\label{lem:orderpairs}
	Let $G$ be an $n$-vertex graph for $n\geq2$ and $p_1,\ldots,p_s$ be pairwise disjoint elements in $\binom{V(G)}{2}$.
	For each $i\in[s]$, let~$v_i$ be the new vertex of $G/p_i$ and $\mathbf{x}_i:[s]\rightarrow\{-1,0,1\}$ be the function such that
	\begin{linenomath*}\[
		\mathbf{x}_i(j) =
		\begin{cases}
			\rdeg_{G/p_i/p_j}(v_i)-\rdeg_{G/p_i}(v_i) & \text{if $i\neq j$,} \\
			0 & \text{otherwise.}
		\end{cases}
	\]\end{linenomath*}
	Then the twin-width of $G$ is at most
	\begin{linenomath*}\[
		\max \left( \{n-s-1\} \cup \Bigl\{ \rdeg_{G/p_i}(v_i)+\sum_{k=1}^j\mathbf{x}_i(k): 1\leq i\leq j\leq s-1\Bigr\} \right).
	\]\end{linenomath*}
\end{LEM}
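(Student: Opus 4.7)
The plan is to analyze the contraction sequence that first contracts $p_1,\ldots,p_s$ in that order and then completes arbitrarily. Write $G_0:=G$ and $G_j:=G_{j-1}/p_j$ for $j\in[s]$, and extend by arbitrary contractions afterwards. For $j\geq s$, $G_j$ has at most $n-s$ vertices, so $\DeltaR(G_j)\leq n-s-1$; this contributes the first term of the claimed maximum, and it remains to bound $\DeltaR(G_j)$ for $j\in[s-1]$.

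Fix such a $j$ and consider a vertex $w$ of $G_j$. If $w$ lies outside every pair $p_1,\ldots,p_s$, then $w$ is a singleton under the partial contraction sequence $G_0,\ldots,G_j$, and Lemma~\ref{lem:contraction2} (together with $\rdeg_G(w)=0$) gives $\rdeg_{G_j}(w)\leq j$. Since the pairs are pairwise disjoint we have $s\leq\lfloor n/2\rfloor$, hence $j\leq s-1\leq n-s-1$, so this bound too is absorbed by the first term of the maximum.

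The heart of the argument is the identity
\begin{linenomath*}\[
	\rdeg_{G_j}(v_i)=\rdeg_{G/p_i}(v_i)+\sum_{k=1}^{j}\mathbf{x}_i(k)
\]\end{linenomath*}
for $i\in[j]$. To prove it, I would use that contractions of pairwise disjoint pairs commute, so $G_j$ equals the trigraph obtained from $G/p_i$ by contracting the pairs $p_k$ with $k\in[j]\setminus\{i\}$ in any order. Processing these pairs one at a time while tracking $\rdeg(v_i)$: at each step the two vertices of the next pair $p_k$ are untouched by the previous disjoint contractions, so the edges between $v_i$ and $p_k$ are identical to those in $G/p_i$; consequently contracting $p_k$ changes $\rdeg(v_i)$ by exactly $\rdeg_{G/p_i/p_k}(v_i)-\rdeg_{G/p_i}(v_i)=\mathbf{x}_i(k)$. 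Summing the increments over $k\in[j]\setminus\{i\}$ and using $\mathbf{x}_i(i)=0$ yields the identity.

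Taking the maximum over $i\in[j]$ and $j\in[s-1]$ and combining with the trivial bound $n-s-1$ for $j\geq s$ produces the claimed upper bound on $\tww(G)$. I do not anticipate a genuine obstacle: the only non-bookkeeping ingredient is the additivity identity above, and that reduces immediately to the disjointness of $p_1,\ldots,p_s$.
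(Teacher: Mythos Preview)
Your argument is essentially the paper's own: build $G_0,\dots,G_s$ by contracting $p_1,\dots,p_s$ in order, bound the tail by $\tww(G_s)\le n-s-1$, apply Lemma~\ref{lem:contraction2} to singleton vertices, and use the additivity identity $\rdeg_{G_j}(v_i)=\rdeg_{G/p_i}(v_i)+\sum_{k=1}^j\mathbf{x}_i(k)$ for the contracted vertices (the paper states this identity as immediate from the definition of $\mathbf{x}_i$; your commutativity justification is a welcome elaboration). One small imprecision in your case split: for $j\in[s-1]$ the vertices of $G_j$ other than $v_1,\dots,v_j$ include not only vertices lying outside every pair but also the vertices of the not-yet-contracted pairs $p_{j+1},\dots,p_s$; these too are singletons under $\sigma$, so Lemma~\ref{lem:contraction2} disposes of them by the same argument.
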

\begin{proof}
	Let $G_0:=G$ and $G_i:=G_{i-1}/p_i$ for each $i\in [s]$.
	We also denote the new vertex of $G_i$ by $v_i$.
	Since $G_0,\ldots,G_s$ is a partial contraction sequence from $G$ to $G_s$, $\tww(G) \leq \max\{\DeltaR(G_0),\ldots,\DeltaR(G_{s-1}),\tww(G_s)\}$.
	Since $G_s$ has $n-s$ vertices, $\tww(G_s)<\abs{V(G_s)}=n-s$.
	Let $j\in[s-1]$.
	Since $\DeltaR(G)=0$, by Lemma~\ref{lem:contraction2}, for each vertex $v$ in $V(G_j)\setminus \{v_1,\dots,v_j\}$, $\rdeg_{G_j}(v)\leq j$.
	For each $i\in[j]$, by the definition of $\mathbf{x}_i$, $\rdeg_{G_j}(v_i)=\rdeg_{G/p_i}(v_i)+\sum_{k=1}^j\mathbf{x}_i(k)$.
	Therefore,
	\begin{linenomath*}\[
		\DeltaR(G_j)\leq\max \left( \{j\} \cup \Bigl\{ \rdeg_{G/p_i}(v_i)+\sum_{k=1}^j\mathbf{x}_i(k): i\in[j]\Bigr\} \right),
	\]\end{linenomath*}
	and this completes the proof, because $j\leq s-1 \leq n-s-1$.
\end{proof}

We now prove Theorem~\ref{main1}.

\firstmain*

\begin{proof}
	We may assume that $n \geq 3$, since every graph on at most two vertices has twin-width $0$.
	Let $M$ be a maximal set of pairwise disjoint elements in $\binom{V(G)}{2}$ such that
	\begin{linenomath*}\[
		\abs{(N_G(u)\triangle N_G(v))\setminus\{u,v\}}\leq\frac{n}{2} +\frac{n-1}{2(\sqrt{n}-1)} - 1=\frac{n+\sqrt{n}-1}{2}
	\]\end{linenomath*}
	and each $\{u,v\}\in M$.
	By Proposition~\ref{prop:L} with $L=V(G)\setminus\bigcup_{Q\in M}Q$ and the maximality of $M$, we have $\abs{L}<\sqrt{n}$, and therefore $\abs{M}=(n-\abs{L})/2>(n-\sqrt{n})/2$.

	We show that there is an ordering of the elements in $M$ such that if we contract the elements in~$M$ in this order, then the maximum red-degree is not too large.
	Let $s:=\abs{M}$ and $p_1,\ldots,p_s$ be the elements in $M$ and for each $i\in[s]$, $v_i$ be the new vertex of $G/p_i$ and $\mathbf{x}_i:[s]\rightarrow\{-1,0,1\}$ be the function such that
	\begin{linenomath*}\[
		\mathbf{x}_i(j) =
		\begin{cases}
			\rdeg_{G/p_i/p_j}(v_i)-\rdeg_{G/p_i}(v_i) & \text{if $i\neq j$,} \\
			0 & \text{otherwise.}
		\end{cases}
	\]\end{linenomath*}
	Let $d:=(n+\sqrt{n\ln n} + \sqrt{n} + 2\ln n)/2$.
	Let $\rho$ be a permutation of $[s]$ chosen uniformly at random.
	For each~$i\in[s]$, let~$A_i$ be the event that $i\neq\rho(s)$ and
	\begin{linenomath*}\[
		\rdeg_{G/p_i}(v_i)+\max\left\{\sum_{k=1}^j\mathbf{x}_i(\rho(k)):\rho^{-1}(i)\leq j\leq s-1\right\}
		\geq d.
	\]\end{linenomath*}
	Note that if $i\neq\rho(s)$, then there is some $j$ with $\rho^{-1}(i)\leq j\leq s-1$, so $A_i$ is well defined.
	
	We first show that $\Pr[A_i]<1/s$ for each $i\in[s]$.
	We fix an element~$i$ in~$[s]$.
	Let $\alpha := |\{j\in[s]: \mathbf{x}_i(j)=-1\}|$, $\beta := |\{j\in[s] : \mathbf{x}_i(j)=1\}|$, and $\gamma:=s-\alpha-\beta$.
	We can construct a North-East lattice path from $(0,0)$ to $(\alpha,\beta)$ as follows.
	For each $j\in[s]$ with $\mathbf{x}_i(\rho(j))\neq0$, let $a_j:=\abs{\{j'\in[j]:\mathbf{x}_i(\rho(j'))\neq0\}}$, and
	\[
		\mathbf{p}_0:=(0,0),\quad
		\mathbf{p}_{a_j}:=
		\begin{cases}
			\mathbf{p}_{a_j-1}+(0,1) & \text{if $\mathbf{x}_i(\rho(j))=1$},\\
			\mathbf{p}_{a_j-1}+(1,0) & \text{if $\mathbf{x}_i(\rho(j))=-1$}.
		\end{cases}
	\]
	Then $\mathbf{p}_0,\ldots,\mathbf{p}_{\alpha+\beta}$ is a North-East lattice path from $(0,0)$ to $(\alpha,\beta)$ chosen uniformly at random.

	Since
	\begin{linenomath*}\[
		\max\left\{\sum_{k=1}^j\mathbf{x}_i(\rho(k)):\rho^{-1}(i)\leq j\leq s-1\right\}\leq\beta\leq s-1,
	\]\end{linenomath*}
	we may assume that $\rdeg_{G/p_i}(v_i)\geq d-s+1$.
	Let $B_i$ be the event that
	\begin{linenomath*}\[
		\rdeg_{G/p_i}(v_i)+\max\left\{\sum_{k=1}^j\mathbf{x}_i(\rho(k)):1\leq j\leq s\right\}\geq d.
	\]\end{linenomath*}
	We observe that $A_i\subseteq B_i$ and that $\Pr[B_i]$ is equal to the probability that a North-East lattice path from $(0,0)$ to $(\alpha,\beta)$ chosen uniformly at random intersects the line $y=x+t$ where
	\begin{linenomath*}\[
		t:=-\rdeg_{G/p_i}(v_i)+\lceil d\rceil.
	\]\end{linenomath*}
	Since $\rdeg_{G/p_i}(v_i) \leq (n+\sqrt{n}-1)/2$, we have $t \geq d-(n+\sqrt{n}-1)/2=(\sqrt{n\ln n}+1)/2+\ln n>0$.
	By Corollary~\ref{cor:latticepath2}, $\Pr[B_i]\leq(\beta/(\alpha+t))^t$.
	Thus, it suffices to show that $(\beta/(\alpha+t))^t<1/s$.
	For $i,j\in[s]$, we observe that $\mathbf{x}_i(j)=-1$ if and only if $G/p_i$ has two red edges from $v_i$ to $p_j$.
	In addition, if $i=j$ or $G/p_i$ has exactly one red edge from $v_i$ to $p_j$, then $\mathbf{x}_i(j)=0$.
	Therefore, the number of vertices in $\bigcup_{Q\in M\setminus\{p_i\}}Q$ which are adjacent to $v_i$ by red edges in $G/p_i$ is at most $2\alpha+\gamma-1$.
	Thus,
	\begin{linenomath*}\postdisplaypenalty=0\begin{align*}
		d
		\leq t+\rdeg_{G/p_i}(v_i)
		&\leq t+2\alpha+\gamma-1+\left|V(G)\setminus\bigcup_{Q\in M}Q\right|\\
		&=t+\alpha-\beta-1+n-s,
	\end{align*}\end{linenomath*}
	so that $\beta\leq t+\alpha-d-1+n-s$.
	Then since $1-x\leq e^{-x}$ for all real $x$,
	\begin{linenomath*}\[
		\left(\frac{\beta}{\alpha+t}\right)^t
		\leq\left(1-\frac{d+s+1-n}{\alpha+t}\right)^t
		\leq\exp\left(-\frac{t(d+s+1-n)}{\alpha+t}\right).
	\]\end{linenomath*}
	Since $1/n<1/s$, it suffices to show that $t(d+s+1-n)\geq(\alpha+t)\ln n$, that is, $t(d+s+1-n-\ln n)\geq\alpha\ln n$.
	Since $2\alpha\leq\rdeg_{G/p_i}(v_i) \leq (n+\sqrt{n}-1)/2$, we have $\alpha\ln n \leq (n\ln n+\sqrt{n}\ln n - \ln n)/4$.
	Note that $t \geq (\sqrt{n\ln n}+1)/2+\ln n$.
	Since $s>(n-\sqrt{n})/2$, we have $d+s>\sqrt{n\ln n}/2+n+\ln n$.
	Thus,
	\begin{linenomath*}\postdisplaypenalty=0\begin{align*}
		&t(d+s+1-n-\ln n)-\alpha\ln n\\
		&>\left(\frac{\sqrt{n\ln n} + 1}{2}+\ln n\right) \frac{\sqrt{n\ln n}+2}{2}-\frac{n\ln n+\sqrt{n}\ln n - \ln n}{4}\\
		&>\left(\frac{\sqrt{n\ln n} }{2}+\ln n\right) \frac{\sqrt{n\ln n}}{2}-\frac{n\ln n+\sqrt{n}\ln n}{4}\\
		&=\frac{\sqrt{n} (\ln n)^{3/2}}{2} - \frac{\sqrt{n} \ln n}{4} > 0,
	\end{align*}\end{linenomath*}
	and therefore $\Pr[A_i]\leq\Pr[B_i]<1/s$.

	By the union bound, $\Pr[\bigcap_{i=1}^{s}\overline{A_i}]>0$, that is, there exists a permutation~$\rho_0$ of $[s]$ such that $\rdeg_{G/{p_i}}(v_i) + \max\{\sum_{k=1}^j\mathbf{x}_i(\rho_0(k)):\rho_0^{-1}(i)\leq j\leq s-1\}<d$ for each $i\in[s] \setminus \{\rho_0(s)\}$.
	Note that $n-s-1<(n+\sqrt{n}-2)/2<d$.
	Therefore, by Lemma~\ref{lem:orderpairs}, the twin-width of~$G$ is less than $d$.
\end{proof}

\section{Twin-width versus the number of edges}\label{sec:edge}

We now prove Theorem~\ref{main2} which improves the bound in Theorem~\ref{main1} for any $n$-vertex graph having at most $n^2/12$ edges.
Let us restate the theorem.

\secondmain*

We first briefly sketch the proof.
We will construct a contraction sequence for $G$ in three phases.
In the first phase, for some $k\approx\sqrt{4m/3}$, we find at most $k$ disjoint sets of vertices, covering all but at most $k$ vertices of~$G$, such that the sum of the degrees of the vertices within each of the sets is at most $2m/k$.
We begin the contraction sequence by contracting each set into a new vertex.
We can do this in such a way that the red-degree will not exceed the maximum of $k$ and $2m/k$.

In the second phase, we apply Theorem~\ref{main1} to obtain a contraction sequence for the graph induced on the set of vertices not covered by the disjoint sets in the first phase.
Note that throughout this contraction sequence, the red-degree of a vertex created in the first phase is still at most the sum of the degrees of the vertices in the corresponding set.

In the final phase, a small number of vertices remain, so taking an arbitrary contraction sequence of the resulting trigraph suffices.

We will need the following lemma for the first phase.
While it is equivalent to restrict $k$ to be an integer in the following lemma, we use the current form for convenience.

\begin{LEM}\label{lem:partition}
	Let $a_1,\ldots,a_n$ be a sequence of nonnegative reals and $d:=\sum_{i=1}^n a_i/n$.
	For a positive real $k$, there exist a subset $R$ of $[n]$ with $\abs{R}<k$ and a partition $\{B_1,\ldots,B_{k'}\}$ of $[n]\setminus R$ such that $k'\leq\lceil k\rceil$ and $\sum_{j\in B_i} a_j \leq dn/k$ for each $i\in[k']$.
\end{LEM}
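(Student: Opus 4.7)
The plan is a greedy first-fit construction followed by a simple averaging argument. Fix $c:=dn/k$ and $k^*:=\lceil k\rceil$; open $k^*$ empty bins, and process the indices $1,\ldots,n$ in any order, placing $a_j$ into any bin whose current sum plus $a_j$ is at most $c$, and adding $j$ to $R$ if no bin can accommodate $a_j$. By construction each bin has final sum at most $c=dn/k$ and at most $k^*=\lceil k\rceil$ bins are used, so only the bound $|R|<k$ needs real argument.

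To bound $|R|$, partition it into $R_1:=\{j\in R:a_j>c\}$ and $R_2:=\{j\in R:a_j\leq c\}$. If $R_2=\emptyset$, then each element of $R_1$ contributes more than $c$ to $\sum_j a_j=dn=kc$, so $|R|\cdot c<dn=kc$ and hence $|R|<k$ directly. Otherwise, the key observation is that whenever an index $j\in R_2$ is rejected, every bin must already have sum strictly greater than $c-a_j$; since bin sums are monotone, every final bin sum then exceeds $c-\mu$, where $\mu:=\min\{a_j:j\in R_2\}>0$ (a weight-$0$ element would always fit somewhere). Summing over the $k^*$ bins and combining the identity $\sum_i\mathrm{sum}(B_i)+\sum_{j\in R}a_j=dn$ with the inequality $k^*c\geq kc=dn$ gives $\sum_{j\in R}a_j<k^*\mu$. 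On the other hand, $\sum_{j\in R_1}a_j>|R_1|c\geq|R_1|\mu$ and $\sum_{j\in R_2}a_j\geq|R_2|\mu$ together yield $\sum_{j\in R}a_j\geq|R|\mu$; dividing the resulting chain $|R|\mu\leq\sum_{j\in R}a_j<k^*\mu$ by $\mu$ produces $|R|<k^*$.

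The final step is the arithmetic observation $\lceil k\rceil-1<k$ for every positive real $k$ (check separately when $k$ is or is not an integer), which lets the integer inequality $|R|<\lceil k\rceil$ tighten to $|R|<k$, as required. I expect this last equivalence to be the main subtle point of the argument: the obvious averaging produces $|R|<\lceil k\rceil$, which one must recognise is equivalent to the slightly stronger-looking $|R|<k$ simply because $|R|$ is an integer and $\lceil k\rceil-1<k$.
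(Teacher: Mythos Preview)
Your argument is correct, but it takes a genuinely different route from the paper's proof. The paper proceeds by induction on $\lceil k\rceil$: it greedily extracts one \emph{maximal} block $B$ with $\sum_{j\in B}a_j\le dn/k$, moves one further element $r\notin B$ to $R$, and recurses on the remaining indices with parameter $k-1$; maximality of $B$ guarantees that the residual total drops by more than $dn/k$, so the recursive threshold $d'n'/(k-1)$ stays below $dn/k$ and the bounds $\abs{R}<k$, $k'\le\lceil k\rceil$ follow directly from the inductive hypothesis. By contrast, you run an online first-fit bin-packing into $\lceil k\rceil$ bins simultaneously and then bound $\abs{R}$ after the fact with a load argument: the rejection of the lightest element of $R_2$ forces every bin above $c-\mu$, which combined with $dn\le\lceil k\rceil c$ squeezes $\sum_{j\in R}a_j$ below $\lceil k\rceil\mu$ and hence $\abs{R}<\lceil k\rceil$.

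The inductive proof is shorter and delivers $\abs{R}<k$ in one stroke; your approach is a bit longer, needs the case split on $R_2$, and only yields $\abs{R}<\lceil k\rceil$, which you then tighten to $\abs{R}<k$ via the integrality observation $\lceil k\rceil-1<k$. On the other hand, your construction is perhaps more transparent algorithmically (it is literally first-fit), and it shows that \emph{any} processing order of the indices works, whereas the paper's proof implicitly commits to a particular nested order of blocks. Both arguments are elementary and of comparable strength; neither dominates the other.
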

\begin{proof}
	We proceed by induction on $\lceil k \rceil$.
	We may assume that $k>1$, because otherwise we can take $B_1=[n]$.
	We may assume that $k\leq n$, because otherwise we can take $R=[n]$.
	Let $B$ be a maximal subset of $[n]$ such that $\sum_{j\in B}a_j\leq dn/k$.
	Since $k\leq n$, there is an integer $j$ in $[n]$ with $a_j\leq dn/k$, because otherwise $\sum_{j=1}^na_j>dn^2/k\geq dn$.
	Thus, $B$ exists and is nonempty.
	In addition, since $k>1$, $B$ is a proper subset of~$[n]$.
	Let $r$ be an element in~$[n]\setminus B$.
	Without loss of generality, we may assume that $[n]\setminus(B\cup\{r\})$ is equal to $[n']$ for some positive integer $n'$.
	Let $d' := \sum_{j=1}^{n'} a_j / n'$.
	By the maximality of~$B$, $dn-d'n'= \sum_{j \in B \cup \{r\}} a_j>dn/k$, and therefore $d'n'/(k-1)<dn/k$.
	By the inductive hypothesis, there exist a subset $R$ of $[n']$ with $\abs{R}<k-1$ and a partition $\{B_1,\ldots,B_t\}$ of $[n']\setminus R$ such that $t\leq\lceil k-1\rceil$ and $\sum_{j\in B_i}a_j\leq d'n'/(k-1)<dn/k$ for each $i\in[t]$.
	Then the subset $R\cup\{r\}$ and the partition $\{B_1,\ldots,B_t,B\}$ of $[n]\setminus(R\cup\{r\})$ satisfy the required conditions.
\end{proof}

We now prove Theorem~\ref{main2}.

\begin{proof}[Proof of Theorem~\ref{main2}]
	It is easy to check that if $m\leq 3$, then $\tww(G) \leq 1$.
	Thus, we may assume that $m\geq4$.
	Let $n:=|V(G)|$ and $q:=\sqrt{4m/3}$.
	Note that $2<q<n$, because $4\leq m<n^2/2$.
	Let $d$ be the average degree of $G$,
	\begin{linenomath*}\postdisplaypenalty=0\begin{align*}
		k&:=q-\frac{\sqrt{q\ln q}+\sqrt{q}}{6}-\frac{\ln q}{9},\\
		\alpha&:=\frac{3}{2}\left(q+\frac{\sqrt{q\ln q}+\sqrt{q}}{6}+\frac{5\ln q}{9}\right).
	\end{align*}\end{linenomath*}
	Since $q>\ln q$, we have $q > \sqrt{q\ln q}>\ln q$, and therefore $1<q/2<k<q$.
	By Lemma~\ref{lem:partition}, there exist a subset $R$ of $V(G)$ with $r:=\abs{R}<k$ and a partition $\{B_1,\ldots,B_{k'}\}$ of $V(G)\setminus R$ such that $k'\leq\lceil k\rceil$ and $\sum_{v\in B_i} \deg_G(v) \leq dn/k$ for each $i\in[k']$.
	We may assume that $R$ is nonempty by moving one vertex in $B_i$ into $R$ for some $i\in[k']$ if necessary.
	Since $k<q<n$, $R$ is a proper subset of $V(G)$ and $k'\geq1$.
	For each $i\in[k']$, let $\ell_i:=\abs{B_i}$ and $v_{i,1},\ldots,v_{i,\ell_i}$ be the vertices in~$B_i$.

	Now, we construct an $\lceil\alpha\rceil$-contraction sequence of $G$.
	Let $G_1:=G$ and for $i\in[k']$ and $j\in[\ell_i]\setminus\{1\}$,
	\begin{linenomath*}\[
		G_{j+\sum_{h=1}^{i-1}(\ell_h-1)}:=G_{j-1+\sum_{h=1}^{i-1}(\ell_h-1)}/\{v_{i,1},v_{i,j}\}.
	\]\end{linenomath*}
	Here, the new vertex created by contracting $v_{i,1}$ and $v_{i,j}$ will be called $v_{i,1}$ for convenience.
	Let $t:=\sum_{h=1}^{k'}(\ell_h-1)+1=n-k'-r+1$.
	We observe that $G[R]=G_t[R]$.
	By Theorem~\ref{main1}, $G[R]$ admits an $((r+\sqrt{r\ln r}+\sqrt{r}+2\ln r)/2)$-contraction sequence $H_1,\ldots,H_r$.
	For each $i\in[r-1]$, let $x_i$ and $y_i$ be the vertices in $H_i$ such that $H_{i+1}=H_i/\{x_i,y_i\}$ and define
	\begin{linenomath*}\[
		G_{t+i}:=G_{t+i-1}/\{x_i,y_i\}.
	\]\end{linenomath*}
	Finally, we take an arbitrary contraction sequence $G_{t+r-1},\ldots,G_n$ of $G_{t+r-1}$.
	We show that $\DeltaR(G_s) \leq \lceil \alpha \rceil$ for each $s\in[n]$.
	For each $i\in[n]$, let $\sigma_i$ be the partial contraction sequence $G_1,\ldots,G_i$.
	Let $s\in[n]$ and $v$ be a vertex of~$G_s$.
	
	Firstly, suppose that $s\leq t$.
	If $\abs{\sigma_s^{-1}(v)}=1$, then by Lemma~\ref{lem:contraction2}, $\rdeg_G(v)\leq k'\leq\lceil k\rceil<\alpha$.
	If $\abs{\sigma_s^{-1}(v)}>1$, then by the construction of~$G_s$, $v=v_{i,1}$ for some $i\in[k']$, and therefore by Lemma~\ref{lem:contraction},
	\begin{linenomath*}\postdisplaypenalty=0\begin{align*}
		\rdeg_{G_s}(v)
		\leq\deg_{G_s}(v)
		\leq\sum_{w\in\sigma_s^{-1}(v)}\deg_G(w)
		\leq\sum_{w\in B_i}\deg_G(w)
		\leq\frac{dn}{k}.
	\end{align*}\end{linenomath*}
	Since
	\begin{linenomath*}\postdisplaypenalty=0\begin{align*}
		k\alpha
		&=\frac{3}{2}\left(q-\frac{\sqrt{q\ln q}+\sqrt{q}}{6}-\frac{\ln q}{9}\right)\left(q+\frac{\sqrt{q\ln q}+\sqrt{q}}{6}+\frac{5\ln q}{9}\right)\\
		&=\frac{3}{2}\left(\left(q+\frac{2\ln q}{9}\right)^2-\left(\frac{\sqrt{q\ln q}+\sqrt{q}}{6}+\frac{\ln q}{3}\right)^2\right)\\
		&>\frac{3}{2}\left(\left(q+\frac{2\ln q}{9}\right)^2-\frac{4q\ln q}{9}\right)\\
		&=\frac{3}{2}\left(q^2+\frac{4\ln^2q}{81}\right)=dn+\frac{2\ln^2q}{27}>dn,
	\end{align*}\end{linenomath*}
	we have $dn/k<\alpha$, so that $\rdeg_{G_s}(v)<\alpha$.
	Therefore, $\DeltaR(G_s)<\alpha$ for every $s\leq t$.
	
	Secondly, suppose that $t<s<t+r$.
	By the construction of $G_s$, if $v=v_{i,1}$ for some $i\in[k']$, then $\sigma_s^{-1}(v)=\sigma_t^{-1}(v)=B_i$.
	Therefore, by Lemma~\ref{lem:contraction},
	\begin{linenomath*}\[
		\rdeg_{G_s}(v)\leq\deg_{G_s}(v)\leq\sum_{w\in \sigma^{-1}_s(v)}\deg_G(w)=\sum_{w\in B_i}\deg_G(w)\leq\frac{dn}{k}<\alpha.
	\]\end{linenomath*}
	If $v\in V(G_s)\setminus\{v_{i,1}:i\in[k']\}$, then since $G_s\setminus\{v_{i,1}:i\in[k']\}=H_{s-t+1}$,
	\begin{linenomath*}\postdisplaypenalty=0\begin{align*}
		\rdeg_{G_s}(v)
		&\leq\abs{\{v_{i,1}:i\in[k']\}}+\rdeg_{H_{s-t+1}}(v)\\
		&<k+1+\frac{r+\sqrt{r\ln r}+\sqrt{r}+2\ln r}{2}\\
		&<\frac{3k}{2}+\frac{\sqrt{q\ln q}+\sqrt{q}+2\ln q}{2}+1\\
		&=\frac{3}{2}\left(q-\frac{\sqrt{q\ln q}+\sqrt{q}}{6}-\frac{\ln q}{9}\right)+\frac{\sqrt{q\ln q}+\sqrt{q}+2\ln q}{2}+1\\
		&=\alpha+1.
	\end{align*}\end{linenomath*}
	Since $\rdeg_{G_s}(v)$ is an integer, we have $\rdeg_{G_s}(v)\leq\lceil\alpha\rceil$.
	Therefore, $\DeltaR(G_s)\leq\lceil\alpha\rceil$ for every $s$ with $t<s<t+r$.

	Thirdly, if $s\geq t+r$, then
	\begin{linenomath*}\[
		\DeltaR(G_s)<\abs{V(G_s)}=n-s+1\leq n-t-r+1 = k'\leq\lceil k\rceil<\alpha,
	\]\end{linenomath*}
	and therefore $\tww(G)\leq\lceil\alpha\rceil$.
	By the mean value theorem,
	\begin{linenomath*}\[
		\frac{\sqrt{\ln(4m/3)} - \sqrt{\ln m}}{m/3} = \frac{1}{2 m' \sqrt{\ln m'}}
	\]\end{linenomath*}
	for some $m' \in (m, 4m/3)$.
	Hence, $\sqrt{\ln(4m/3)} - \sqrt{\ln m} < 1/(6\sqrt{\ln m})$.
	Then
	\begin{linenomath*}\postdisplaypenalty=0\begin{align*}
		&\alpha - \sqrt{3m}+1\\
		&=\frac{\sqrt{q\ln q}}{4}+\frac{\sqrt{q}}{4}+\frac{5\ln q}{6}+1\\
		&=\frac{\sqrt{\sqrt{m/3}\cdot \ln (4m/3) }}{4}
		+\frac{\sqrt{2} m^{1/4}}{4 \cdot 3^{1/4}}
		+\frac{5(\ln m+\ln(4/3))}{12}+1\\
		&<\frac{1}{4 \cdot 3^{1/4}} \left( m^{1/4} \sqrt{\ln m} + \frac{m^{1/4}}{6\sqrt{\ln m}}  \right)
		+\frac{\sqrt{2} m^{1/4}}{4 \cdot 3^{1/4}}
		+\frac{5(\ln m+\ln(4/3))}{12}+1\\
		&<\frac{m^{1/4} \sqrt{\ln m}}{4 \cdot 3^{1/4}} + \frac{m^{1/4}}{24 \cdot 3^{1/4} \sqrt{\ln m}} +\frac{\sqrt{2} m^{1/4}}{4 \cdot 3^{1/4}} + 
		\frac{5 \ln m}{12} + \frac{9}{8}\\
		&=\frac{m^{1/4} \sqrt{\ln m}}{4 \cdot 3^{1/4}} + m^{1/4} \left(\frac{1}{24 \cdot 3^{1/4} \sqrt{\ln m}}+\frac{\sqrt{2}}{4 \cdot 3^{1/4}}+\frac{5\ln m}{12 m^{1/4}} + \frac{9}{8 m^{1/4}}\right)\\
		&<\frac{m^{1/4} \sqrt{\ln m}}{4 \cdot 3^{1/4}} +\frac{3 m^{1/4}}{2},
	\end{align*}\end{linenomath*}
	where the last inequality holds because
	\begin{linenomath*}\[
		\frac{1}{24 \cdot 3^{1/4} \sqrt{\ln x}}+\frac{\sqrt{2}}{4 \cdot 3^{1/4}}+\frac{5\ln x}{12 x^{1/4}} + \frac{9}{8 x^{1/4}}
	\]\end{linenomath*} 
	is decreasing if $x\geq4$.
	Thus,
	\begin{linenomath*}\[
		\tww(G)<\alpha+1< \sqrt{3m}+ \frac{m^{1/4} \sqrt{\ln m}}{4\cdot 3^{1/4}} + \frac{3m^{1/4}}{2}.\qedhere
	\]\end{linenomath*}
\end{proof}

\section{Twin-width and random graphs}\label{sec:asymptotic}

We now prove Theorems~\ref{main3} and~\ref{smallp}.
First, we present the proof of Theorem~\ref{main3}, which provides a lower bound for the twin-width of $G(n,p)$.
Since a graph and its complement have the same twin-width~\cite{twin-width1}, we focus on random graphs $G(n,p)$ with $0<p\leq1/2$.

\thirdmain*

\begin{proof}
	By Lemma~\ref{lem:firststep}, it suffices to show that for all distinct $i,j\in[n]$,
	\begin{linenomath*}\[
		\abs{(N_G(i)\triangle N_G(j))\setminus\{i,j\}} > 2 p(1-p) n - (2\sqrt{2}+\varepsilon)(p(1-p)n \ln n)^{1/2}
	\]\end{linenomath*}
	asymptotically almost surely.
	Let $s := 1 - p^2 - (1-p)^2 = 2p(1-p)$.
	For all distinct $i,j\in[n]$ and each $k\in[n]\setminus\{i,j\}$, let $X^{\{i,j\}}_k$ be the random variable such that
	\begin{linenomath*}\[
		X^{\{i,j\}}_k = \begin{cases}
			1 & \text{if exactly one of $i$ or $j$ is adjacent to $k$ in $G$,}\\
			0 & \text{otherwise},
		\end{cases}
	\]\end{linenomath*}
	which indicates whether $k$ is incident with a red edge in $G/\{i,j\}$.
	For all distinct $i,j\in[n]$, let $X^{\{i,j\}}:=\sum_{\ell\in[n]\setminus\{i,j\}}X^{\{i,j\}}_\ell$.
	It is readily seen that $\E[X^{\{i,j\}}]= s(n-2)$ for all distinct $i,j\in[n]$.
	Let $C:=2+\varepsilon/\sqrt{2}$ and $D:=2+\varepsilon/2$.
	For all distinct $i,j\in[n]$, let $A^{\{i,j\}}$ be the event that $X^{\{i,j\}} > sn - C(sn\ln n)^{1/2}$.
	Then it suffices to show that $\Pr[ \bigcap_{1\leq i<j\leq n}A^{\{i,j\}}]\rightarrow1$ as $n\rightarrow\infty$.
	Since $1/n \leq p \leq 1/2$, we have $sn\geq 1$.
	Thus, for all sufficiently large~$n$,
	\begin{linenomath*}\postdisplaypenalty=0\begin{align*}
		&C(sn\ln n)^{1/2} - D(s(n-2)\ln n)^{1/2}
		> (C-D)(sn\ln n)^{1/2}
		\geq 2 \geq 2s.
	\end{align*}\end{linenomath*}
	That is,
	\begin{linenomath*}\postdisplaypenalty=0\begin{align*}
		-C(sn\ln n)^{1/2}<
		-2s- D(s(n-2)\ln n)^{1/2}.
	\end{align*}\end{linenomath*}
	Then by the union bound and Chernoff bound,
	\begin{linenomath*}\postdisplaypenalty=0\begin{align*}
		\Pr\left[ \bigcup_{1\leq i<j\leq n}\overline{A^{\{i,j\}}} \right]
		&\leq\binom{n}{2}\Pr\left[ \overline{A^{\{1,2\}}} \right]\\
		&=\binom{n}{2}\Pr[X^{\{1,2\}} \leq sn - C(sn\ln n)^{1/2}]\\
		&\leq\binom{n}{2}\Pr[X^{\{1,2\}} \leq s(n-2) - D(s(n-2)\ln n)^{1/2}]\\
		&\leq\binom{n}{2} \exp\left(-\frac{D^2s(n-2)\ln n}{2s(n-2)}\right)\\
		&=\binom{n}{2} n^{-D^2/2} <n^{2-D^2/2}\rightarrow0
	\end{align*}\end{linenomath*}
	as $n\rightarrow\infty$, and this completes the proof.
\end{proof}

As a corollary, we remark that if $(2+\delta)n^{-1}\ln n\leq p\leq 1/2$ for some positive real $\delta$, then $\tww(G(n,p)) = \Omega(\ln n)$ asymptotically almost surely.

We now prove Theorem~\ref{smallp}.
We will use the following three theorems of Erd\H{o}s and R\'{e}nyi~\cite{Erdoes1960}.

\begin{THM}[Erd\H{o}s and R\'{e}nyi~\cite{Erdoes1960}]\label{thm:unicyclic}
	Let $c$ be a constant in $(0,1)$.
	The random graph $G:=G(n,c/n)$ asymptotically almost surely has the property that every component of $G$ has at most one cycle.
\end{THM}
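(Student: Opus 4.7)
The plan is to prove, via the first-moment method applied to \emph{minimal bicyclic subgraphs}, that asymptotically almost surely $G:=G(n,c/n)$ contains no connected subgraph of cyclomatic number at least $2$; equivalently, no component of $G$ contains more than one cycle.

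First I reduce to three concrete shapes. If a component $C$ of $G$ has cyclomatic number at least $2$, pick a spanning tree $T$ of $C$ and two non-tree edges $e_1,e_2$, and let $F_i$ be the fundamental cycle of $e_i$ with respect to $T$, consisting of $e_i$ together with the tree path $P_i$ between its endpoints. Since two paths in a tree intersect in a (possibly empty) subpath, a short case analysis yields: if $P_1\cap P_2=\emptyset$ then $F_1,F_2$ are vertex-disjoint cycles joined in $T$ by a path, forming a \emph{dumbbell}; if $P_1\cap P_2$ is a single vertex, then $F_1\cup F_2$ is a \emph{figure-eight} (two cycles sharing exactly one vertex); and if $P_1\cap P_2$ has at least one edge, then $F_1\cup F_2$ is a \emph{theta graph} (two vertices joined by three internally vertex-disjoint paths).

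I then bound the expected number of copies of each type by a routine calculation. For a theta graph with path lengths $\ell_1,\ell_2,\ell_3\geq 1$ (subject to $\ell_i+\ell_j\geq 3$ for distinct $i,j$ so the graph is simple), the subgraph has $\ell_1+\ell_2+\ell_3-1$ vertices and $\ell_1+\ell_2+\ell_3$ edges, so the expected number of its copies in $G$ is at most $n^{\ell_1+\ell_2+\ell_3-1}(c/n)^{\ell_1+\ell_2+\ell_3}=c^{\ell_1+\ell_2+\ell_3}/n$. Summing over admissible triples gives a convergent geometric series bounded by $(c/(1-c))^3/n=O(1/n)$. Analogous estimates for figure-eights (indexed by cycle lengths $k_1,k_2\geq 3$) and dumbbells (indexed by $k_1,k_2\geq 3$ and connecting-path length $\ell\geq 1$) each contribute $O(1/n)$.

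Combining these bounds via the union bound and Markov's inequality, the probability that $G$ contains any minimal bicyclic subgraph is $O(1/n)\to 0$, so every component of $G$ has at most one cycle asymptotically almost surely. The main delicate step will be the structural reduction to the three shapes: the key point is the tree-theoretic observation that any two tree paths intersect in a subpath, which rules out more exotic overlap patterns and forces the minimal obstruction to be a dumbbell, figure-eight, or theta graph.
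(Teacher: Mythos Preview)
The paper does not prove this statement at all; Theorem~\ref{thm:unicyclic} is quoted as a classical result of Erd\H{o}s and R\'{e}nyi~\cite{Erdoes1960} and used as a black box in the proof of Theorem~\ref{smallp}. There is therefore nothing to compare against.

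Your argument is the standard one and is essentially correct. A component with cyclomatic number at least~$2$ contains a connected subgraph with exactly one more edge than vertices; after iteratively deleting leaves this becomes a connected graph with minimum degree~$2$, $v'$ vertices and $v'+1$ edges, whose degree sequence forces it to be a theta, a figure-eight, or a dumbbell. Each such graph $H$ has $\abs{E(H)}=\abs{V(H)}+1$, so the expected number of copies in $G(n,c/n)$ is at most $n^{\abs{V(H)}}(c/n)^{\abs{V(H)}+1}=c^{\abs{V(H)}+1}/n$, and since $c<1$ the sum over all sizes is a convergent geometric series of order $O(1/n)$; Markov's inequality finishes. One small point: in your fundamental-cycle reduction, the case $P_1\cap P_2=\emptyset$ needs an extra word explaining why the tree path you take between $V(F_1)$ and $V(F_2)$ can be chosen internally disjoint from both cycles (take a shortest such tree path between the two vertex sets). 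Alternatively, the leaf-stripping argument above bypasses this case analysis entirely.
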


A graph $G$ is \emph{balanced} if for every non-null subgraph $H$ of $G$, the average degree of $H$ is at most the average degree of $G$.
We remark that all trees and cycles are balanced.

\begin{THM}[Erd\H{o}s and R\'{e}nyi~\cite{Erdoes1960}]\label{thm:balanced}
	Let $p:\mathbb{N}\rightarrow[0,1]$ be a function and~$H$ be a balanced graph with $v$ vertices and $e$ edges.
	If $p = o(n^{-v/e})$, then $G(n,p)$ has no subgraph isomorphic to $H$ asymptotically almost surely.
	If $p = \omega(n^{-v/e})$, then $G(n,p)$ has a subgraph isomorphic to $H$ asymptotically almost surely.
\end{THM}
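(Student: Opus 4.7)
The plan is to apply the method of moments to the random variable $X$ that counts subgraphs of $G := G(n,p)$ isomorphic to $H$. Writing $X = \sum_\phi Y_\phi$, where $\phi$ ranges over injective maps from $V(H)$ to $[n]$ and $Y_\phi$ is the indicator that $\phi$ embeds $H$ as a subgraph of $G$, a routine count (dividing by $\abs{\mathrm{Aut}(H)}$ to pass from labelled to unlabelled copies) gives $\E[X] = \Theta(n^v p^e)$.

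For the sub-threshold direction, if $p = o(n^{-v/e})$ then $\E[X] = \Theta((np^{e/v})^v) = o(1)$, so Markov's inequality yields $\Pr[X \geq 1] \leq \E[X] \to 0$.

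For the super-threshold direction with $p = \omega(n^{-v/e})$, the Markov bound is vacuous since $\E[X] \to \infty$, so I would turn to Chebyshev's inequality in the form $\Pr[X = 0] \leq \mathrm{Var}(X)/\E[X]^2$, reducing the task to showing $\mathrm{Var}(X) = o(\E[X]^2)$. My plan is to expand $\mathrm{Var}(X)$ as a double sum over ordered pairs $(\phi, \phi')$ and partition the terms according to the isomorphism type of the intersection subgraph $J$ spanned by the common edges of $\phi(H)$ and $\phi'(H)$. Pairs with edge-disjoint images contribute zero covariance, since $Y_\phi$ and $Y_{\phi'}$ depend on disjoint sets of potential edges and are therefore independent. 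For pairs whose intersection is isomorphic to a fixed non-null $J$ with $v_J$ vertices and $e_J \geq 1$ edges, there are $O(n^{2v-v_J})$ such pairs (choose the $v_J$ shared vertices, then extend each of $\phi$ and $\phi'$ to the remaining $v - v_J$ vertices), and each contributes $\E[Y_\phi Y_{\phi'}] = p^{2e-e_J}$, giving
\[
    \mathrm{Var}(X) = O\!\left(\sum_J n^{2v - v_J}\, p^{2e - e_J}\right) = \E[X]^2 \cdot O\!\left(\sum_J (n^{v_J/e_J}\, p)^{-e_J}\right),
\]
where the sum ranges over the finitely many isomorphism types of non-null subgraphs of $H$.

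The balanced hypothesis then enters decisively: by assumption, every non-null subgraph $J$ of $H$ satisfies $2e_J/v_J \leq 2e/v$, equivalently $v_J/e_J \geq v/e$. Consequently $n^{v_J/e_J}\, p \geq n^{v/e}\, p \to \infty$ under $p = \omega(n^{-v/e})$, so each summand above is $o(1)$, and finiteness of the index set yields $\mathrm{Var}(X) = o(\E[X]^2)$ as required. The main obstacle will be the variance bookkeeping: one must enumerate the pair types correctly so that the hidden constants really are independent of $n$, and pinpoint the exact step at which the balanced condition is invoked, since without it a single denser proper subgraph of $H$ would dominate the sum and the second-moment argument would collapse.
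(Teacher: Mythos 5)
The paper cites this classical threshold theorem from Erd\H{o}s and R\'{e}nyi~\cite{Erdoes1960} without reproducing a proof, so there is no in-paper argument to compare against; your second-moment calculation is the standard (and essentially the original) proof of the result and is correct. The first-moment/Markov step for $p = o(n^{-v/e})$, the variance decomposition by isomorphism type of the shared non-null subgraph $J$, and the invocation of the balanced hypothesis $v_J/e_J \geq v/e$ exactly where it is needed to force each term $n^{-v_J}p^{-e_J} \to 0$ are all in order, with the implicit constants (counting overlays of two copies of $H$ on a shared $J$) depending only on $H$ as required.
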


\begin{THM}[Erd\H{o}s and R\'{e}nyi~\cite{Erdoes1960}]\label{thm:cycle}
	For a function $p : \mathbb{N} \to [0,1]$ with $p = o(n^{-1})$, $G(n,p)$ is a forest asymptotically almost surely.
\end{THM}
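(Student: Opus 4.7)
The plan is to bound the expected number of cycles in $G:=G(n,p)$ by a quantity that tends to zero, and then conclude via Markov's inequality that $G$ is acyclic (hence a forest) asymptotically almost surely. I will not try to apply Theorem~\ref{thm:balanced} directly, because that only rules out any fixed cycle length $k$, whereas here I need to rule out cycles of \emph{every} length $k\in\{3,\ldots,n\}$ simultaneously.

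First, for each integer $k$ with $3\leq k\leq n$, I would count the number of potential cycles of length $k$ on the vertex set $[n]$: one chooses $k$ vertices in $\binom{n}{k}$ ways, then arranges them cyclically in $(k-1)!/2$ ways. Each such cycle appears in $G(n,p)$ with probability $p^k$ because its $k$ edges are independent. Therefore the expected number $C$ of cycles in $G$ satisfies
\begin{linenomath*}\[
\E[C]=\sum_{k=3}^{n}\binom{n}{k}\frac{(k-1)!}{2}p^k
=\sum_{k=3}^{n}\frac{1}{2k}\cdot\frac{n!}{(n-k)!}\cdot p^k
\leq\sum_{k=3}^{n}\frac{(np)^k}{2k}.
\]\end{linenomath*}
Since $p=o(n^{-1})$, we have $np\to0$ as $n\to\infty$. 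In particular, for all sufficiently large $n$, $np<1/2$, and then the tail is dominated by a geometric series:
\begin{linenomath*}\[
\E[C]\leq\frac{1}{6}\sum_{k=3}^{n}(np)^k\leq\frac{(np)^3}{6(1-np)}\leq\frac{(np)^3}{3}\longrightarrow0.
\]\end{linenomath*}

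Finally, by Markov's inequality, $\Pr[C\geq 1]\leq\E[C]\to 0$, so $\Pr[C=0]\to 1$. A graph has no cycle if and only if it is a forest, so $G(n,p)$ is a forest asymptotically almost surely. The only step that requires any care is the uniform handling of all $k$ at once; the key observation that makes this painless is the bound $n!/(n-k)!\leq n^k$, which converts the per-length count into a term depending only on the product $np$, after which a single geometric tail estimate finishes the argument.
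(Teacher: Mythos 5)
The paper does not prove this statement; it is cited as a known result of Erd\H{o}s and R\'{e}nyi~\cite{Erdoes1960} and used as a black box, so there is no in-paper proof to compare against. Your argument is a correct and self-contained first-moment computation: the count $\binom{n}{k}(k-1)!/2$ of potential $k$-cycles is right, the simplification to $n!/(2k(n-k)!)\,p^k\le (np)^k/(2k)$ is right, and the geometric tail bound together with Markov's inequality cleanly gives $\Pr[C\ge 1]\to 0$. You are also correct that Theorem~\ref{thm:balanced} as stated only handles a single \emph{fixed} pattern $H$, so quoting it for each cycle length $k=3,\dots,n$ and union-bounding would require re-examining the error terms to make sure they decay uniformly in $k$; your direct computation sidesteps that issue by treating all lengths at once, which is exactly the standard way this lemma is proved in the random-graphs literature. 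No gaps.
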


We also show that if a connected graph has at most one cycle, then its twin-width is at most $2$.
We will use the following $2$-contraction sequence of a tree~\cite{twin-width1}.

\begin{PROP}[Bonnet et al.~\cite{twin-width1}]\label{prop:tree}
	Every rooted tree admits a $2$-contraction sequence such that the root is contracted only in the last contraction.
\end{PROP}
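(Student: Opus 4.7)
The plan is strong induction on $n:=\abs{V(T)}$. For $n=1$ the claim is vacuous. For $n\geq 2$, let $r$ be the root with children $c_1,\ldots,c_k$ and corresponding subtrees $T_1,\ldots,T_k$. The sequence I would construct processes the subtrees one at a time, merging each into a single auxiliary vertex $w$ as it finishes. Explicitly: apply the inductive hypothesis to each $T_i$ (rooted at $c_i$) to obtain a $2$-contraction sequence $\sigma_i$ of $T_i$ in which $c_i$ is contracted only in the last step. Starting from $T$, first execute $\sigma_1$ and let $w$ be the resulting contracted vertex (still called $c_1$); then for $i=2,\ldots,k$, execute $\sigma_i$ and immediately follow it with a single contraction of $w$ with the new vertex $c_i$, relabeling the result as $w$. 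The very last contraction is $w$ with $r$, so $r$ is only touched at the end.

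For the red-degree analysis, the key point is the following consequence of the inductive hypothesis: throughout $\sigma_i$ the vertex $c_i$ is untouched until its very last contraction, and hence the edge $rc_i$ remains black during all of $\sigma_i$ except possibly after that last step, where it may turn red. Given this, the bookkeeping runs cleanly: a cluster formed inside $T_i\setminus\{c_i\}$ during $\sigma_i$ has no neighbors outside $T_i$, so its red-degree matches the inductive bound of $2$; the vertex $c_i$ has the same red-neighborhood as in $\sigma_i$ plus at most one extra red edge (to $r$) gained only at the very last step of $\sigma_i$, so its red-degree stays at most $2$; the vertex $w$ is untouched during $\sigma_i$ and has only $r$ as a neighbor, so its red-degree is at most $1$; the root $r$ carries at most one red edge to the current $w$ plus at most one to the currently processing $c_i$, for red-degree at most $2$; and when $w$ is contracted with the new $c_i$, both have only $r$ as a neighbor, so the result has at most one (red) edge to $r$ and $r$'s red-degree drops back to at most $1$.

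I expect the main obstacle to be exactly this bookkeeping at $r$: a naive strategy that fully processes all of $T_1,\ldots,T_k$ before merging would let $r$'s red-degree grow to $k$, far beyond $2$. What rescues the argument is the strengthened inductive conclusion that the root of each $T_i$ is contracted only last, which delays the appearance of the red edge $rc_i$ until exactly the moment when $w$ is ready to absorb it, so that at any instant at most two red edges are incident with $r$. The trivial case $T_i=\{c_i\}$ (a leaf) simply skips $\sigma_i$ and the edge $rc_i$ stays black throughout, and the same merging step only tightens the bounds.
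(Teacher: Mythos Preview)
The paper does not give its own proof of this proposition; it is cited from Bonnet, Kim, Thomass\'{e}, and Watrigant~\cite{twin-width1} and stated without argument. So there is nothing in the paper to compare your proof against.

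That said, your argument is correct and is the standard one. The strengthened inductive hypothesis (root contracted last) is exactly what makes the bookkeeping at $r$ work: the edge $rc_i$ stays black until the final contraction of $\sigma_i$, so $r$ carries at most one red edge to $w$ and at most one to the current $c_i$ at any moment. One sentence deserves a slight sharpening: when you write that $c_i$ ``has the same red-neighborhood as in $\sigma_i$ plus at most one extra red edge (to $r$) \ldots so its red-degree stays at most~$2$,'' the bound does not follow by adding $2+1$; rather, before the last step of $\sigma_i$ the edge $rc_i$ is black and $\rdeg(c_i)$ equals its value inside $T_i$ (at most~$2$ by induction), while after the last step $c_i$ is the unique vertex of $T_i$ and so has red-degree~$0$ there, plus the single new red edge to~$r$. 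With that reading, every case in your analysis is sound.
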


\begin{PROP}\label{prop:unicyclic}
	If every component of a graph $G$ has at most one cycle, then $\tww(G)\leq2$.
\end{PROP}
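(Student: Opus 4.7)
The plan is to reduce $G$ one component at a time. By hypothesis each component is either a tree or unicyclic (a cycle with a rooted tree attached at each of its vertices). Since contractions inside one component cannot touch another component or create any edge between components, it suffices to exhibit a $2$-contraction sequence reducing each component to a single vertex: concatenating these sequences gives a partial $2$-contraction sequence from $G$ to a trigraph consisting of one isolated vertex per component, which can then be contracted together in any order without creating any red edges.

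For a tree component, Proposition~\ref{prop:tree} delivers the required $2$-contraction sequence at once, so the real work is to handle a unicyclic component $C$ whose cycle is $v_1 v_2 \cdots v_k v_1$ with a (possibly trivial) subtree $T_i$ rooted at $v_i$ for each $i$. I proceed in two stages. First, for each $i$ in turn, apply Proposition~\ref{prop:tree} to $T_i$ rooted at $v_i$ to contract $T_i$ down to $v_i$; the cycle edges $v_{i-1}v_i$ and $v_iv_{i+1}$ stay black throughout and contribute nothing to red-degree, and contractions inside $T_i$ do not interact with any $T_j$ for $j\neq i$, so red-degrees remain at most $2$. Once all $k$ trees are reduced, what remains of $C$ is the black cycle $v_1 \cdots v_k$ with every vertex of red-degree $0$.

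Second, I reduce this black cycle to a single vertex by a \emph{sweeping supervertex} procedure: first contract $v_1$ and $v_2$ into a supervertex $w$, noting that the contraction rules from Section~\ref{subsec:twin} yield red edges $wv_3$ and $wv_k$ and no others, and then iteratively contract $w$ with $v_3$, with $v_4$, and so on, absorbing one new cycle vertex into the supervertex at each step. A short induction using rules (i), (ii), (iii) shows that after the step that absorbs $v_{j+1}$ into the current supervertex, that supervertex has red edges precisely to $v_{j+2}$ and $v_k$, and every other vertex has red-degree at most $1$; hence the maximum red-degree stays at $2$. Once only three vertices remain, two more contractions finish the sweep without exceeding red-degree $2$, and the short cases $k\in\{3,4\}$ are checked directly.

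The one part that needs real verification is this cycle-sweep invariant, but once it is stated it follows directly from the definition of $G/\{u,v\}$. The component-by-component composition and the final merging of isolated vertices introduce no further red edges, giving $\tww(G)\leq 2$.
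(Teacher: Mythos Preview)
Your overall plan is sound and the cycle sweep in the second stage is fine (it works equally well whether the remaining cycle is black or red), but the first stage has a genuine gap.

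The claim that ``the cycle edges $v_{i-1}v_i$ and $v_iv_{i+1}$ stay black throughout'' fails at the last contraction performed on $T_i$. Proposition~\ref{prop:tree} says the root $v_i$ is contracted only in the final step, but it \emph{is} contracted there, with some vertex $w\in V(T_i)\setminus\{v_i\}$ that is not adjacent to $v_{i-1}$ or $v_{i+1}$; hence both cycle edges at $v_i$ turn red at that moment, and after the first stage you are left with a red cycle, not a black one. More importantly, this breaks the bound \emph{during} the first stage. Once $T_{i-1}$ has been collapsed, $v_i$ already carries the red edge $v_{i-1}v_i$. Proposition~\ref{prop:tree} only guarantees that every vertex in the $2$-contraction sequence of $T_i$ has red-degree at most~$2$ \emph{within~$T_i$}; nothing prevents the root from reaching internal red-degree~$2$ before its own final contraction. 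For instance, if $v_i$ has children $a,b$, each with one further child, then the sequence that first merges $a$ with its child and then $b$ with its child is a valid $2$-contraction sequence of $T_i$ with the root still uncontracted, and at that point $v_i$ has two red neighbours inside $T_i$. Together with the external red edge to $v_{i-1}$ this gives $\rdeg(v_i)=3$, so the partial sequence is not a $2$-contraction sequence.

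The paper sidesteps this by stopping one step short: each $T_i$ is contracted only down to \emph{two} vertices $v_i$ and $w_i$, so that $v_i$ is never touched in the first phase and the cycle edges genuinely remain black. The second phase is then a tailored sweep, contracting $\{v_1,w_1\}$, then $\{v_1,w_2\}$, $\{v_2,w_3\},\ldots$, which absorbs the pendants while keeping $\DeltaR\le 2$, and finishes by contracting around the resulting cycle. Your argument can be repaired in exactly this way: run Proposition~\ref{prop:tree} on each $T_i$ but omit its final contraction, and redesign the second phase accordingly.
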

\begin{proof}
	Since the twin-width of a graph is the maximum twin-width of its components~\cite{twin-width1}, we may assume that $G$ is connected.
	By Proposition~\ref{prop:tree}, we may assume that $G$ has exactly one cycle $C$.
	We are going to construct a $2$-contraction sequence of $G$.
	Let $v_1,\ldots,v_\ell$ be the vertices of $C$ such that $v_iv_j\in E(C)$ if either $\abs{i-j}=1$ or $\{i,j\}=\{1,\ell\}$.
	For each $i\in[\ell]$, let $T_i$ be the component of $G\setminus\{v_{i-1},v_{i+1}\}$ containing $v_i$ where $v_0:=v_n$ and $v_{\ell+1}:=v_1$ which is a tree rooted at $v_i$.
	For each $i\in[\ell]$, by Proposition~\ref{prop:tree}, there exists a partial $2$-contraction sequence $\sigma_i$ from $T_i$ to some $T'_i$ such that $\sigma_i^{-1}(v_i)=\{v_i\}$ and $T'_i$ has at most two vertices.
	By applying $\sigma_1,\ldots,\sigma_\ell$ in sequence, we obtain a partial contraction sequence $\sigma$ from $G$ to some trigraph $G'$.
	Note that for each $i\in[\ell]$, $v_i$ is not contracted during the process of constructing $G'$, so no edge of $C$ is a red edge of $G'$ and $\sigma$ is a partial $2$-contraction sequence.
	
	Now, we present a $2$-contraction sequence of $G'$.	
	Since taking induced subgraph does not increase twin-width, we may assume that for each $i\in[\ell]$, $T'_i$ has two vertices, say $v_i$ and $w_i$.
	For convenience, if we contract vertices $v_i$ and $w_j$ for some $i,j\in[\ell]$, then we call the new vertex $v_i$.
	Let $G_1:=G'/\{v_1,w_1\}$.
	For each $i\in[\ell]\setminus\{1\}$, let $G_i:=G_{i-1}/\{v_{i-1},w_i\}$.
	One may observe that for each $i\in[\ell]$, $\DeltaR(G_i)=2$, and $G_\ell$ is a cycle.
	Thus, by contracting adjacent vertices of $G_\ell$ iteratively, we find a $2$-contraction sequence of $G'$.
\end{proof}

\begin{figure}
	\centering
	\tikzstyle{v}=[circle, draw, solid, fill=black, inner sep=0pt, minimum width=3pt]
	\begin{tikzpicture}[scale=0.75]
		\draw (0,1) node[v](t){};
		\foreach \x in {0,1,2} {
			\draw (-1+\x,0) node[v](x\x){};
			\draw (-1+\x,-1) node[v](y\x){};
			\draw (t)--(x\x)--(y\x);
		}
	\end{tikzpicture}
	\caption{A $1$-subdivision of $K_{1,3}$.}
	\label{fig:1}
\end{figure}
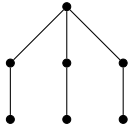
		
A \emph{caterpillar} is a tree that contains a path $P$ such that every vertex outside $P$ is a leaf whose neighbor is in $V(P)$.
It is easy to observe that a tree is a caterpillar if and only if it has no $1$-subdivision of $K_{1,3}$ (see Figure~\ref{fig:1}) as a subgraph.

\begin{LEM}\label{lem:cater}
	For a tree $T$, $\tww(T)\leq1$ if and only if $T$ is a caterpillar.
\end{LEM}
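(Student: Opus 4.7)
The plan is to prove both directions of the biconditional separately.

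For the forward direction, I will exhibit an explicit $1$-contraction sequence for any caterpillar $T$. Let $v_1,\ldots,v_k$ be a spine of $T$ and, for each $i$, let $L_i$ denote the set of leaves of $T$ adjacent to $v_i$. First contract each $L_i$ to a single vertex $w_i$; since the vertices of $L_i$ are true twins, these contractions introduce no red edges. After this preprocessing the trigraph has at most one pendant at each spine vertex. Then I process the spine from $v_1$ to $v_k$, keeping a ``merged'' vertex $u$ with the invariant that $u$ has at most one edge (red or black) to the current ``front'' spine vertex $v_j$ and no other red edges exist in the trigraph. At each step, absorb $w_j$ into $u$ first (if it exists) and then $v_j$; a short case check on the four relevant combinations (whether the edge from $u$ to $v_j$ is red or black, and whether $w_j$ exists) shows that each absorption preserves the invariant, giving $\tww(T)\leq 1$.

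For the reverse direction, suppose $T$ is a tree that is not a caterpillar. As noted just before the lemma, $T$ contains a $1$-subdivision $S$ of $K_{1,3}$ as a subgraph. Because $T$ is acyclic, any additional edge among the seven vertices of this subdivision would create a cycle in $T$, so $S$ is in fact an \emph{induced} subgraph of $T$. Since twin-width is monotone under induced subgraphs~\cite{twin-width1}, it suffices to prove $\tww(S)\geq 2$.

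To show $\tww(S)\geq 2$, I will argue by a finite case analysis. Label $S$ with center $v_0$, subdivision vertices $u_1,u_2,u_3$, and leaves $v_1,v_2,v_3$, so the edges are $v_0 u_i$ and $u_i v_i$ for $i=1,2,3$. A direct tabulation of $\abs{(N_S(a)\triangle N_S(b))\setminus\{a,b\}}$ over all $\binom{7}{2}=21$ pairs shows that this quantity equals $1$ exactly when $\{a,b\}=\{u_i,v_i\}$ for some $i$, and is at least $2$ otherwise. Hence the first contraction of any $1$-contraction sequence of $S$ must be such a pair; by the symmetry of $S$, assume it is $\{u_1,v_1\}$, producing a trigraph on six vertices with a single red edge incident to $v_0$ and four black edges forming a path $v_2 u_2 v_0 u_3 v_3$. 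I will then enumerate the $\binom{6}{2}=15$ possible second contractions and verify that each of them creates a vertex of red-degree at least $2$ (most candidates give red-degree $\geq 2$ at the new merged vertex, while the pairs $(u_i,v_i)$ for $i=2,3$ instead push the red-degree of $v_0$ up to $2$). This bounded enumeration is the main technical step of the proof; it is routine but unavoidable, and I do not see a shortcut that avoids checking each case.
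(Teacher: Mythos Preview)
Your proposal is correct and follows essentially the same two-step argument as the paper: an explicit $1$-contraction sequence for caterpillars, and a finite case analysis showing that the $1$-subdivision of $K_{1,3}$ has twin-width at least $2$ (first contraction forced to be a leaf with its neighbor, then no second contraction keeps red-degree $\leq 1$). Two small remarks: your ``forward'' and ``reverse'' labels are swapped relative to the paper's convention, and the leaves in $L_i$ are \emph{false} twins (equal open neighborhoods), not true twins---though this does not affect the claim that contracting them creates no red edges.
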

\begin{proof}
	We first show that $\tww(T)\leq 1$ if $T$ is a caterpillar.
	We may assume that no pair of leaves are adjacent to the same vertex. 
	Hence, for some integer $n$, $T$ is isomorphic to an induced subgraph of the tree obtained from the path $v_2v_4\cdots v_{2n}$ by appending a leaf $v_{2i-1}$ to the vertex $v_{2i}$ for each $i\in [n]$. 
	We find a $1$-contraction sequence for this graph by iteratively contracting $v_i$ and $v_{i+1}$ into $v_{i+1}$ for $i\in [2n-1]$.

	For the forward direction, it suffices to show that the $1$-subdivision of $K_{1,3}$ has twin-width $2$.
	Let $H$ be the $1$-subdivision of $K_{1,3}$.
	It is readily seen that for distinct vertices $v$ and $w$ of~$H$, $\abs{(N_H(v)\triangle N_H(w))\setminus\{v,w\}}\leq1$ if and only if $v$ and $w$ are adjacent and one of them is a leaf.
	Thus, by Lemma~\ref{lem:firststep}, we may assume that we first contract such vertices, say $v$ and $w$.
	Then $H/\{v,w\}$ has no two vertices whose contraction yields a $1$-trigraph.
\end{proof}

We now prove Theorem~\ref{smallp}.

\smallp*

\begin{proof}
	If $p=o(n^{-4/3})$, then by Theorems~\ref{thm:balanced} and~\ref{thm:cycle}, $G$ has no induced path of length $3$ asymptotically almost surely.
	A graph has twin-width $0$ if and only if it has no induced path of length $3$~\cite{twin-width1}, and therefore $\tww(G)=0$ asymptotically almost surely.

	If $p = \omega(n^{-4/3})$ and $p = o(n^{-7/6})$, then by Theorems~\ref{thm:balanced} and~\ref{thm:cycle}, $G$ is a forest and contains a path of length $3$ and no $1$-subdivision of $K_{1,3}$ as a subgraph asymptotically almost surely.
	Then by Lemma~\ref{lem:cater}, $\tww(G) = 1$ asymptotically almost surely.

	If $p = \omega(n^{-7/6})$ and $p = o(n^{-1})$, then by Theorems~\ref{thm:balanced} and~\ref{thm:cycle}, $G$ is a forest and contains the $1$-subdivision of $K_{1,3}$ as a subgraph asymptotically almost surely.
	Then by Proposition~\ref{prop:tree} and Lemma~\ref{lem:cater}, $\tww(G) = 2$ asymptotically almost surely.
	
	Now, suppose that $p = \omega(n^{-9/8})$ and $p\leq c/n$ for a constant $c \in (0,1)$.
	By Theorem~\ref{thm:unicyclic}, every component of $G$ has at most one cycle asymptotically almost surely, and therefore $\tww(G) \leq 2$ asymptotically almost surely by Proposition~\ref{prop:unicyclic}.
	By Theorem~\ref{thm:balanced}, $G$ has the $1$-subdivision of $K_{1,4}$ as a subgraph asymptotically almost surely, and therefore has the $1$-subdivision of $K_{1,3}$ as an induced subgraph.
	Since the $1$-subdivision of $K_{1,3}$ has twin-width $2$, we have $\tww(G) \geq 2$ asymptotically almost surely.
\end{proof}

\providecommand{\bysame}{\leavevmode\hbox to3em{\hrulefill}\thinspace}
\providecommand{\MR}{\relax\ifhmode\unskip\space\fi MR }
% \MRhref is called by the amsart/book/proc definition of \MR.
\providecommand{\MRhref}[2]{%
  \href{http://www.ams.org/mathscinet-getitem?mr=#1}{#2}
}
\providecommand{\href}[2]{#2}

\end{document}